\newtheorem{theorem}{Theorem}[section]
\newtheorem{lemma}[theorem]{Lemma}
\newtheorem{corollary}[theorem]{Corollary}
\theoremstyle{definition}
\theoremstyle{remark}
\newtheorem{remark}[theorem]{Remark}
\numberwithin{equation}{section}
\begin{document}
\title{Gr\"uss inequality for some types of positive linear maps}
\author[J.S. Matharu and M.S. Moslehian]{Jagjit Singh Matharu and Mohammad Sal Moslehian}
\address{J. S. Matharu, Department of Mathematics, Bebe Nanaki University College,
Mithra, Kapurthla, Punjab, India.}
\email{matharujs@yahoo.com}
\address{M. S. Moslehian, Department of Pure Mathematics, Center of Excellence in
Analysis on Algebraic Structures (CEAAS), Ferdowsi University of
Mashhad, P.O. Box 1159, Mashhad 91775, Iran.}
\email{moslehian@um.ac.ir and moslehian@member.ams.org}

\subjclass[2010]{Primary 47A63; Secondary 46L05, 47A30, 47B15, 15A60}
\keywords{Operator inequality; Gr\"uss inequality; completely
positive map; $C^*$-algebra; matrix, unitarily invariant norm;
singular value.}

\begin{abstract}
Assuming a unitarily invariant norm $|||\cdot|||$ is given on a two-sided ideal of bounded linear operators acting on a separable Hilbert space, it induces some unitarily invariant norms $|||\cdot|||$ on matrix algebras $\mathcal{M}_n$ for all finite values of $n$ via $|||A|||=|||A\oplus 0|||$. We show that if $\mathscr{A}$ is a $C^*$-algebra of finite dimension
$k$ and $\Phi: \mathscr{A} \to \mathcal{M}_n$ is a unital completely
positive map, then
\begin{equation*}
|||\Phi(AB)-\Phi(A)\Phi(B)||| \leq \frac{1}{4}
|||I_{n}|||\,|||I_{kn}||| d_A d_B
\end{equation*}
for any $A,B \in
\mathscr{A}$, where $d_X$ denotes the diameter of the unitary orbit
$\{UXU^*: U \mbox{ is unitary}\}$ of $X$ and $I_{m}$ stands for the
identity of $\mathcal{M}_{m}$. Further we get an analogous
inequality for certain $n$-positive maps in the setting of full
matrix algebras by using some matrix tricks. We also give a Gr\"uss
operator inequality in the setting of $C^*$-algebras of arbitrary
dimension and apply it to some inequalities involving continuous
fields of operators.
\end{abstract}

\maketitle

%---------------------------------------------------------------------------------------%

\section{Introduction}

The Gr\" uss inequality \cite{GRU}, as a complement of Chebyshev's
inequality, states that if $f$ and $g$ are integrable real functions
on $[a, b]$ and there exist real constants $\varphi, \Phi, \gamma,
\Gamma$ such that $\varphi \leq f(x) \leq \Phi$ and $\gamma \leq
g(x) \leq \Gamma$ hold for all $x \in [a, b]$, then
\begin{equation}\label{MSM}
\frac{1}{b-a} \int_a^b f(x)g(x)dx - \frac{1}{(b-a)^2} \int_a^b
f(x)dx \int_a^b g(x)dx \leq \frac{1}{4}(\Phi- \varphi)(\Gamma -
\gamma)\,.
\end{equation}
The constant $1/4$ is the best possible and is achieved for
$f(x)=g(x)={\rm sgn}\big(x-(a+b)/2\big).$ It has been the
subject of much investigation in which the conditions on the
functions are varied to obtain different estimates. This inequality
has been investigated, applied and generalized by many
mathematicians in different areas of mathematics, such as inner
product spaces, quadrature formulae, finite Fourier transforms and
linear functionals; see \cite{DRA2} and references within. It has
been generalized for inner product modules over $H^*$-algebras and
$C^*$-algebras by Bani\'c, Ili\v{s}evi\'c and Varo\v{s}anec
\cite{B-I-V}. Renaud \cite{REN} gave matrix analogue of Gr\"uss
inequality by replacing integrable functions by normal matrices and
the integration by a trace function as follows: Let $A,B$ be square
matrices whose numerical ranges are lying in the circular discs of
radii $r$ and $s$, respectively. Then for a matrix $X$ of trace one,
\[
\left| \rm{tr}(XAB) - \rm{tr}(XA)\rm{tr} (XB) \right| \le krs\,,
\]
where $1\le k \le 4$. If $A$ and $B$ are normal, then $k=1$. Another
Gr\"uss type inequality involving the trace functional is given by
Bourin \cite{BOU}. Peri\'c and Raji\'c \cite{P-R} extended the
result of Renaud by showing that if $\Phi$ is a unital completely
bounded linear map from a unital $C^*$-algebra $\mathscr{A}$ to the
$C^*$-algebra of bounded operators on some Hilbert space
$\mathscr{H}$, then
$$\|\Phi(AB) - \Phi(A)\Phi(B)\| \leq \|\Phi\|_{cb}\,{\rm diam}(W^1(A))\,{\rm
diam}(W^1(B))$$ for every $A, B \in \mathscr{A}$, where
$W^1(\cdot)=\{\varphi(\cdot): \varphi \mbox{ is a state of }
\mathscr{A}\}$ denotes the generalized numerical range and
$\|\Phi\|_{cb}=\sup_n\|\Phi_n\|$. This result was extended by
Moslehian and Raji\'c \cite{M-R} for $n$-positive linear maps ($n
\geq 3$). In addition, Jocic, Krtinic and Moslehian \cite{JKM}
presented a Gr\"uss inequality for inner product type integral
transformers in norm ideals. Also, several operator Gr\"uss type
inequalities are given by Dragomir in \cite{DRA2} by utilizing the
continuous functional calculus and spectral resolution for
self-adjoint operators.

In this paper, we present a general Gr\"uss inequality for unital
completely positive maps and unitarily invariant norms.
Further we get a similar inequality for certain $n$-positive maps in the setting of full matrix algebras
by emplying some matrix tricks. We also give a Gr\"uss operator
inequality in the setting of $C^*$-algebras of arbitrary dimension
and apply it to inequalities involving continuous fields of
operators.

\section{Preliminaries}

Let ${\mathbb B}({\mathscr H})$ be the $C^*$-algebra of all bounded
linear operators on a complex (separable) Hilbert space $({\mathscr H}, \langle
\cdot , \cdot \rangle)$ and $I$ be its identity. Whenever $\dim
\mathscr{H}=n$, we identify $\mathbb{B}(\mathscr{H})$ with the the
full matrix algebra $\mathcal{M}_n$ of all $n \times n$ matrices
with entries in the complex field $\mathbb{C}$ and denote its
identity by $I_n$. We write $A\geq 0$ if $A$ is a positive operator
(positive semi-definite matrix) in the sense that $\langle A x,x
\rangle \geq 0$ for all $x\in \mathscr{H}$. Further, $A \geq B$ if
$A$ and $B$ are self adjoint operators and $A-B\geq 0$. Let $\mathbb{K}(\mathscr{H})$ denote the ideal of compact operators
on $\mathscr{H}$. For any operator $A\in \mathbb{K}(\mathscr{H})$,
let $s_{1}(A), s_{2}(A),\cdots$ be the eigenvalues of $|A|=
(A^*A)^{1\over{2}}$ in decreasing order and repeated according to
multiplicity. If $A \in \mathcal{M}_n$, we take
$s_k(A)=0$ for $k>n$.\\
Denote by $c_0$ the set of complex sequences converging to zero.
Consider the set $c_F \subseteq c_0$ of sequences with finite non-zero entries.
For $a \in c_0$, denote $\lfloor a\rfloor=(|a_n|)_{n \in \mathbb{N}} \in c_0$.
Following \cite[Section III.3]{GK}, a symmetric norming function (or symmetric gauge function for matrices \cite[p. 86]{bhatiaa}) is a map $g:c_F \to \mathbb{R}$ satisfying the properties
\begin{itemize}
\item[(i)] $g$ is a norm on $c_F$;
\item[(ii)] $g(a)=g(\lfloor a\rfloor)$ for every $a \in c_F$;
\item[(iii)] $g$ is invariant under permutations.
\end{itemize}
For $a=(a_i) \in c_0$, let us define $g(a)=\sup_{n \in \mathbb{N}} g(a_1,\ldots,a_n,0,\ldots) \in \mathbb{R} \cup \{+\infty\}$.\\
A unitarily invariant norm in $\mathbb{K}(\mathscr{H})$ is a map
$|||\cdot||| : \mathbb{K}(\mathscr{H}) \to [0,\infty]$
given by $|||A|||=g(s(A))$, $A \in \mathbb{K}(\mathscr{H})$, where $g$ is a symmetric norming function; see \cite[Chapter III]{GK}.
The set $\mathcal{C}_{|||\cdot|||}=\{A \in \mathbb{K}(\mathscr{H}) :|||A||| < \infty \}$
is a self-adjoint (two-sided) ideal of $\mathbb{B}(\mathscr{H})$. The Ky Fan norms as an example of unitarily invariant norms are defined as $\| A\|
_{(k)}=\sum_{j=1}^{k}s_{j}(A)$ for $k=1,2,\ldots$. The Ky Fan
dominance theorem \cite[Theorme IV.2.2]{bhatiaa} states that $\| A\|
_{(k)}\leq \| B\| _{(k)}\,\,(k=1,2,\ldots )$ if and only if
$|||A||| \leq |||B|||$ for all unitarily invariant norms
$|||\cdot|||$. It is known that the Schatten $p$-norms
$\|A\|_p=\left(\sum_{j=1}^\infty s_j^p(A)\right)^{1/p}$ are also unitarily
invariant norms for $p \geq 1$; cf. \cite[Section IV.2]{bhatiaa}. Another example of a unitarily invariant norm is the usual operator
norm $\|\cdot\|$. The notation $A\oplus B$ is used for the block matrix
$\begin{pmatrix} A & 0 \\ 0 & B \end{pmatrix}$. It should be noted
that $\|A\oplus B\|=\max\{\|A\|,\|B\|\}$. \\
Throughout the paper we assume that a unitarily invariant norm $|||\cdot|||$ is given on a two-sided ideal of bounded linear operators acting on a separable Hilbert space and then the norms $|||\cdot|||$ on matrix algebras $\mathcal{M}_n$ for all finite values of $n$ are induced by it via
\begin{align}\label{notation}
|||A|||=|||A\oplus 0|||\,.
\end{align}
Thus we indeed deal with a system of unitarily invariant norms $\{|||\cdot|||_s\}$ on algebras $\mathcal{M}_s,\,\, s \leq N$ or on all algebras $\mathcal{M}_s,\,\, s \geq 1$ satisfying the relation $|||A|||_s=|||A\oplus 0_{(t-s)(t-s)}|||_t,\,\, A \in \mathcal{M}_s, t>s $ between norms of matrices of different sizes.

The unitary orbit of an operator $A$ is defined as the set of all operators
of the form $UAU^*$, where $U$ is a unitary. The diameter of the
unitary orbit is
\begin{equation*}
d_A=\sup \{\|AU-UA\|: U~ \mbox{is~unitary}\}= \sup_{\|X\|=1}
\|AX-XA\|= 2 \Delta(A, \mathbb{C} I)\,,
\end{equation*}
where $ \Delta(A, \mathbb{C}I)=\inf _{\lambda \in \mathbb{C}} \|A-
\lambda I\| $ is the $\|\cdot\|$-distance of $A$ from the scalar
operators; see \cite{STA}.

A linear map $\Phi: \mathscr{A} \to \mathscr{B}$ between
$C^*$-algebras is called positive if $\Phi(A)\geq 0 $ whenever $A\ge
0$ and is called unital if $\Phi$ preserves the identity in the case
that both $C^*$-algebras $\mathscr{A}, \mathscr{B}$ are unital.
Without any ambiguity we denote the identity of a $C^*$-algebra
$\mathscr{A}$ by $I$ as well. It follows from the linearity of a
positive map that $\Phi(A^*)=\Phi(A^*)$ for any $A \in \mathscr{A}$.
Let $\mathcal{M}_n (\mathscr{A})$ denotes the $n \times n$ block
matrix with entries from $\mathscr{A}$. Each linear map $\Phi:
\mathscr{A} \to \mathscr{B}$ induces a linear map $\Phi_n$ from $
\mathcal{M}_n(\mathscr{A})$ to $\mathcal{M}_n(\mathscr{B})$ defined
by $\Phi_n([A_{ij}]_{n\times n})=[\Phi(A_{ij})]_{n \times n}$. We
say that $\Phi$ is $n$-positive if the map $\Phi_{n}$ is positive
and $\Phi$ is completely positive if the maps $\Phi_{n}$ are
positive for all $n=1,2,\ldots $. It is a known that due to
Stinespring that the restriction of any positive linear map to a
unital commutative $C^*$-algebra is completely positive,
\cite[Theorem 4]{STI}.

\section{Gr\"uss inequality for the finite dimensional case}
To achieve our main result we need the following well-known lemmas.
The first lemma is an immediate consequence of the min-max principle
and the Ky Fan dominance theorem.

\begin{lemma}
\label{lem3.1} \cite[p. 75]{bhatiaa} Let $A, X, B\in \mathcal{M}_n$.
Then
\begin{itemize}
\item[(i)] $s_{j}(AXB)\leq \|A\|\,s_{j}(X)\,\|B\| \quad (j=1,2,\ldots,
n)$.\\
\item[(ii)] $|||AXB||| \leq \|A\|\,\,|||X|||\,\,\|B\|$.
\end{itemize}
\end{lemma}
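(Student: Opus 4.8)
The plan is to treat the two parts in order, with part (i) doing the real work and part (ii) following by the Ky Fan dominance theorem. For part (i), I would invoke the min-max (Courant–Fischer) characterization of singular values: for $T \in \mathcal{M}_n$,
\[
s_j(T) = \min_{\substack{\mathcal{L} \subseteq \mathbb{C}^n \\ \dim \mathcal{L} = n-j+1}} \ \max_{\substack{x \in \mathcal{L} \\ \|x\| = 1}} \|Tx\|.
\]
Fix $j$ and let $\mathcal{L}_0$ be a subspace of dimension $n-j+1$ attaining the minimum for $X$, so that $\max_{x \in \mathcal{L}_0,\ \|x\|=1}\|Xx\| = s_j(X)$. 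The key estimate is that for any unit vector $x$ one has $\|AXBx\| \le \|A\|\,\|XBx\|$. The subtlety is that $Bx$ need not be a unit vector, so one cannot directly read off $s_j(X)$; I would handle this by first reducing to the case $\|B\| \le 1$ after scaling (the inequality is homogeneous of degree one in $B$), and then noting that $x \mapsto Bx$ maps the unit ball into itself, hence $\|XBx\| \le \sup_{\|y\|\le 1, \ y \in B\mathcal{L}_0}\|Xy\|$. A cleaner route avoiding this nuisance is to bound directly: $s_j(AXB) \le \|A\| \, s_j(XB)$ via the above min-max (using $\|AXBx\| \le \|A\|\|XBx\|$ on the optimal subspace for $XB$), and similarly $s_j(XB) \le \|B\| s_j(X)$ by working with the adjoint, since $s_j(XB) = s_j(B^*X^*)$ and the same argument gives $s_j(B^*X^*) \le \|B^*\| s_j(X^*) = \|B\| s_j(X)$. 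Chaining these two gives $s_j(AXB) \le \|A\|\, s_j(X)\, \|B\|$ for each $j = 1,\ldots,n$.

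For part (ii), sum the inequalities from part (i) over $j = 1, \ldots, k$ to obtain
\[
\|AXB\|_{(k)} = \sum_{j=1}^k s_j(AXB) \le \|A\|\,\|B\| \sum_{j=1}^k s_j(X) = \|A\|\,\|B\|\,\|X\|_{(k)}
\]
for every $k$. By the Ky Fan dominance theorem (\cite[Theorem IV.2.2]{bhatiaa}), the validity of $\|AXB\|_{(k)} \le \|\,\|A\|\,\|B\|\, X\|_{(k)}$ for all $k$ is equivalent to $|||AXB||| \le |||\,\|A\|\,\|B\|\,X||| = \|A\|\,\|B\|\,|||X|||$ for every unitarily invariant norm, where the last equality uses positive homogeneity of $|||\cdot|||$. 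This is exactly the claimed bound.

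The main obstacle is the bookkeeping in part (i): one must be careful that the min-max principle is applied to an operator whose optimal subspace is compatible with the factor being estimated, which is why I would split the argument as $s_j(AXB) \le \|A\| s_j(XB)$ and $s_j(XB) \le \|B\| s_j(X)$ rather than trying to peel off both factors simultaneously. Everything else — the summation and the appeal to Ky Fan dominance — is routine. Since the statement is quoted from \cite[p. 75]{bhatiaa}, I would likely present only a brief sketch along these lines rather than a full derivation.
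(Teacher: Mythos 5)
Your proof is correct and follows exactly the route the paper indicates: the paper gives no written proof but cites \cite[p.~75]{bhatiaa} and remarks that the lemma is ``an immediate consequence of the min-max principle and the Ky Fan dominance theorem,'' which is precisely your argument (peeling off $\|A\|$ and $\|B\|$ one at a time via min-max and the adjoint for part (i), then summing to Ky Fan norms and invoking dominance for part (ii)). No discrepancies to report.
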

%===================================================================================%
The next lemma gives an estimate of $|||K|||$ when $K$ is a
contraction, i.e. a matrix of operator norm less than or equal one.
\begin{lemma}
\label{q} Let $|||.|||$ be a unitarily invariant norm on
$\mathcal{M}_n$. If $K$ is a contraction, then
$$|||K||| \leq |||I_n|||\,.$$
\end{lemma}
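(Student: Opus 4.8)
The plan is to reduce the bound on $|||K|||$ for a contraction $K$ to a comparison of singular values, and then invoke the Ky Fan dominance theorem. The key observation is that since $K$ is a contraction, $\|K\| = s_1(K) \le 1 = s_1(I_n)$, and more generally we should compare the partial sums $\sum_{j=1}^{m} s_j(K)$ with $\sum_{j=1}^{m} s_j(I_n)$.

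First I would recall that $s_j(I_n) = 1$ for $j = 1, \ldots, n$ (and, under the convention \eqref{notation}, $s_j(I_n)=0$ for $j>n$), so that the $m$-th Ky Fan norm of $I_n$ is $\|I_n\|_{(m)} = \min\{m,n\}$. On the other hand, since $K$ is a contraction, each singular value satisfies $s_j(K) \le \|K\| \le 1$, and $s_j(K) = 0$ for $j > n$; hence $\|K\|_{(m)} = \sum_{j=1}^{m} s_j(K) \le \min\{m,n\} = \|I_n\|_{(m)}$ for every $m = 1, 2, \ldots$. Having established $\|K\|_{(m)} \le \|I_n\|_{(m)}$ for all $m$, the Ky Fan dominance theorem (\cite[Theorem IV.2.2]{bhatiaa}, quoted in the Preliminaries) yields $|||K||| \le |||I_n|||$ for every unitarily invariant norm, which is exactly the assertion.

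There is essentially no obstacle here; the only point requiring a moment's care is the bookkeeping with the convention \eqref{notation} that identifies norms on matrices of different sizes by padding with zeros, so that the statement $s_j(K) \le s_j(I_n)$ for all $j$ (including $j>n$, where both sides are zero) is literally correct and the dominance theorem applies verbatim. An alternative, even shorter route avoiding singular values altogether: write the polar-type decomposition and observe that a contraction $K$ lies in the (closed convex) set whose extreme points are unitaries — but since unitarily invariant norms are unitarily invariant and convex, $|||K||| \le \sup_{U \text{ unitary}} |||U||| = |||I_n|||$; this uses only that $|||U||| = |||I_n|||$ for unitary $U$ (immediate from unitary invariance) together with the fact that the contractions form the convex hull of the unitaries in $\mathcal{M}_n$. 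I would present the singular-value argument as the main proof since it is self-contained given the tools already assembled in the excerpt.
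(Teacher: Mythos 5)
Your proof is correct, but it runs along a different line than the paper's. The paper deduces the lemma in one stroke from Lemma \ref{lem3.1}~(ii): writing $|||K|||=|||\,|K|\,|||=|||\,|K|^{1/2}\,I_n\,|K|^{1/2}\,|||\le \|\,|K|^{1/2}\,\|\,|||I_n|||\,\|\,|K|^{1/2}\,\|=\|K\|\,|||I_n|||\le |||I_n|||$, i.e.\ it factors $|K|$ around the identity and uses the inequality $|||AXB|||\le\|A\|\,|||X|||\,\|B\|$. You instead verify the stronger pointwise dominance $s_j(K)\le s_j(I_n)$ for all $j$ (which is immediate since $s_j(K)\le\|K\|\le 1$ and $s_j(I_n)=1$ for $j\le n$, with the zero-padding convention \eqref{notation} handling $j>n$), sum to get $\|K\|_{(m)}\le\|I_n\|_{(m)}$ for every Ky Fan norm, and invoke the Fan dominance theorem quoted in the Preliminaries. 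Both arguments ultimately rest on the same machinery (the paper's Lemma \ref{lem3.1} is itself a consequence of the min--max principle and Fan dominance), but yours is a touch more transparent and actually yields the sharper conclusion of singular-value (indeed weak) majorization, whereas the paper's is shorter given that Lemma \ref{lem3.1}~(ii) is already on the table. Your alternative sketch via the convex hull of the unitaries is also valid, but it needs the extra fact that every contraction in $\mathcal{M}_n$ is a convex combination of unitaries (a Russo--Dye/Kadison--Pedersen type result not stated in the paper), so your choice to present the singular-value argument as the main proof is the right one.
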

\begin{proof}
It follows from Lemma \ref{lem3.1} (ii) that
\begin{align*}|||K||| = |||~|K|~||| &= |||~|K|^{\frac{1}{2}}~ I_n ~ |K|^{\frac{1}{2}}~|||\\
& \leq \|~|K|^{\frac{1}{2}}~\| ~|||I_n|||~ \|~|K|^{\frac{1}{2}}~\|
=\|K\|~|||I_n||| \leq |||I_n|||.
\end{align*}
\end{proof}
The two next lemmas deal with the positivity of block matrices.
%===================================================================================%
\begin{lemma}\cite[Corollary I.3.3]{bhatiaa}
\label{AAAA} Let $A\in \mathcal{M}_n$. Then $A$ is positive if and
only if the block matrix $\left(
\begin{array}{cc}
A & A \\
A & A%
\end{array}%
\right) $ is positive.
\end{lemma}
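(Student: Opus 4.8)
The plan is to prove both implications by evaluating the quadratic form of the block matrix on vectors of $\mathbb{C}^{2n}$, which we identify with the set of pairs $\binom{x}{y}$, $x,y\in\mathbb{C}^n$.

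Suppose first that $A\geq 0$. For any $x,y\in\mathbb{C}^n$ a direct block multiplication gives
\[
\left\langle \begin{pmatrix} A & A \\ A & A \end{pmatrix}\begin{pmatrix} x \\ y \end{pmatrix},\begin{pmatrix} x \\ y \end{pmatrix}\right\rangle=\langle A(x+y),\,x+y\rangle\geq 0 ,
\]
and since every vector of $\mathbb{C}^{2n}$ is of this form, the block matrix is positive. Alternatively, writing $A=C^*C$ with $C\in\mathcal{M}_n$ (for instance $C=A^{1/2}$) and $V=\binom{I_n}{I_n}$, one checks that $\begin{pmatrix} A & A \\ A & A\end{pmatrix}=VAV^*=\begin{pmatrix} C & C\end{pmatrix}^*\begin{pmatrix} C & C\end{pmatrix}$, which is positive as a Gram matrix; this is just the observation that congruence preserves positivity.

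Conversely, suppose $\begin{pmatrix} A & A \\ A & A \end{pmatrix}\geq 0$. Evaluating its quadratic form on the vector $\binom{x}{0}$ for an arbitrary $x\in\mathbb{C}^n$ yields $\langle Ax,\,x\rangle\geq 0$, so $A\geq 0$.

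I do not anticipate any genuine difficulty, as the statement is elementary. The only steps requiring a little care are the $2\times 2$ block arithmetic that produces $\langle A(x+y),x+y\rangle$ (respectively the identity $VAV^*=\begin{pmatrix} C & C\end{pmatrix}^*\begin{pmatrix} C & C\end{pmatrix}$) and the remark that the pairs $\binom{x}{y}$ exhaust $\mathbb{C}^{2n}$, so that positivity of the block matrix is genuinely tested on all of $\mathbb{C}^{2n}$ and no positive test vector is overlooked in the forward direction.
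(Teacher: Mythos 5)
Your proof is correct and complete. Note that the paper does not prove this lemma at all --- it is quoted verbatim from Bhatia's \emph{Matrix Analysis} (Corollary I.3.3) --- so there is no internal argument to compare against; your direct verification is exactly the standard one. Both of your routes work: the quadratic-form computation
$\left\langle \begin{pmatrix} A & A \\ A & A \end{pmatrix}\begin{pmatrix} x \\ y \end{pmatrix},\begin{pmatrix} x \\ y \end{pmatrix}\right\rangle=\langle A(x+y),x+y\rangle$
settles the forward direction, the congruence $\begin{pmatrix} A & A \\ A & A \end{pmatrix}=VAV^{*}$ with $V=\begin{pmatrix} I_n \\ I_n \end{pmatrix}$ (equivalently the Gram-matrix factorization through $A=C^{*}C$) gives the same conclusion more structurally, and testing on vectors $\begin{pmatrix} x \\ 0 \end{pmatrix}$ gives the converse. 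Since the paper defines positivity of a matrix precisely by nonnegativity of $\langle Ax,x\rangle$ for all $x$ (which over $\mathbb{C}$ already forces self-adjointness), your argument needs no further justification.
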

%===================================================================================%
\begin{lemma}\cite[Theorme IX.5.9]{bhatiaa} \label{cont}
Let $A,B \in \mathcal{M}_n$ be positive. Then the block matrix
$\begin{pmatrix} A & X \\ X^* & B \end{pmatrix}$ is positive if and
only if $ X=A^{1/2}KB^{1/2}$ for some contraction $K$.
\end{lemma}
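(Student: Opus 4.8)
The plan is to establish both implications by elementary factorizations of the Hermitian block matrix $T=\begin{pmatrix} A & X \\ X^* & B\end{pmatrix}$, first treating the case where $A$ and $B$ are invertible and then removing that hypothesis by a compactness (perturbation) argument. Throughout I use the notation $A\oplus B=\begin{pmatrix} A & 0 \\ 0 & B\end{pmatrix}$ introduced above.

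For the ``if'' direction, suppose $X=A^{1/2}KB^{1/2}$ with $\|K\|\le 1$. First I would record that $\begin{pmatrix} I_n & K \\ K^* & I_n\end{pmatrix}$ is positive: for $x,y\in\mathbb{C}^n$,
\[
\left\langle \begin{pmatrix} I_n & K \\ K^* & I_n\end{pmatrix}(x\oplus y),\, x\oplus y\right\rangle = \|x\|^2+\|y\|^2+2\operatorname{Re}\langle Ky,x\rangle \ge (\|x\|-\|y\|)^2 \ge 0,
\]
using $\|Ky\|\le\|y\|$ and the Cauchy--Schwarz inequality. Then the identity
\[
T=\bigl(A^{1/2}\oplus B^{1/2}\bigr)\begin{pmatrix} I_n & K \\ K^* & I_n\end{pmatrix}\bigl(A^{1/2}\oplus B^{1/2}\bigr)
\]
exhibits $T$ as $S^*PS$ with $P\ge 0$ and $S=A^{1/2}\oplus B^{1/2}$ self-adjoint, hence $T\ge 0$.

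For the ``only if'' direction, assume $T\ge 0$ and suppose first that $A$ and $B$ are invertible. Put $K=A^{-1/2}XB^{-1/2}$, so that automatically $X=A^{1/2}KB^{1/2}$; it remains to check that $K$ is a contraction, i.e. $K^*K\le I_n$, equivalently $X^*A^{-1}X\le B$. This is the Schur complement inequality, which I would read off from the factorization
\[
T=\begin{pmatrix} I_n & 0 \\ X^*A^{-1} & I_n\end{pmatrix}\bigl(A\oplus(B-X^*A^{-1}X)\bigr)\begin{pmatrix} I_n & A^{-1}X \\ 0 & I_n\end{pmatrix},
\]
noting that the outer factors are mutually adjoint and invertible (here $A^{-1}$ is self-adjoint since $A>0$), so that congruence by $\begin{pmatrix} I_n & A^{-1}X \\ 0 & I_n\end{pmatrix}$ preserves positivity; thus $A\oplus(B-X^*A^{-1}X)\ge 0$ and in particular $B-X^*A^{-1}X\ge 0$.

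Finally, for general positive $A,B$ I would apply the invertible case to $A_\varepsilon=A+\varepsilon I_n$ and $B_\varepsilon=B+\varepsilon I_n$: since $\begin{pmatrix} A_\varepsilon & X \\ X^* & B_\varepsilon\end{pmatrix}=T+\varepsilon I_{2n}\ge 0$, there are contractions $K_\varepsilon$ with $X=A_\varepsilon^{1/2}K_\varepsilon B_\varepsilon^{1/2}$. The closed unit ball of $\mathcal{M}_n$ is compact, so along some sequence $\varepsilon_j\downarrow 0$ we have $K_{\varepsilon_j}\to K$ with $\|K\|\le 1$; letting $j\to\infty$ and using continuity of the square root on positive matrices gives $X=A^{1/2}KB^{1/2}$. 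The one genuinely delicate point is precisely this limiting step (equivalently, that $X$ lands in the correct range so the limiting $K$ is meaningful); in infinite dimensions one would replace it by Douglas' range-inclusion lemma to produce $K$ supported on the closure of $\operatorname{ran}B^{1/2}$, but over $\mathcal{M}_n$ the compactness argument above is completely elementary.
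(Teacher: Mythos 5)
The paper never proves this lemma at all: it is imported verbatim from Bhatia's \emph{Matrix Analysis} (Theorem IX.5.9), so there is no internal argument to compare against, and the only question is whether your blind proof is sound --- it is. The congruence $T=\bigl(A^{1/2}\oplus B^{1/2}\bigr)\bigl(\begin{smallmatrix} I_n & K\\ K^* & I_n\end{smallmatrix}\bigr)\bigl(A^{1/2}\oplus B^{1/2}\bigr)$ correctly settles the ``if'' direction, and in the invertible case your triangular factorization is exactly the Schur-complement criterion: the outer factors are invertible and mutually adjoint, so positivity of $T$ forces $B-X^*A^{-1}X\ge 0$, which is the same as saying $K=A^{-1/2}XB^{-1/2}$ is a contraction. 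Note that this intermediate statement is precisely the paper's Lemma \ref{lemrb} (cited from Ando), which the authors themselves describe as an equivalent version of Lemma \ref{cont}; your argument makes that equivalence explicit, which is a nice bonus. The perturbation step is also fine and, contrary to your closing caveat, not genuinely delicate over $\mathcal{M}_n$: once $K_{\varepsilon_j}\to K$ with $\|K\|\le 1$ and $A_{\varepsilon_j}^{1/2}\to A^{1/2}$, $B_{\varepsilon_j}^{1/2}\to B^{1/2}$ (continuity of the square root on positive matrices), the identity $X=A_{\varepsilon_j}^{1/2}K_{\varepsilon_j}B_{\varepsilon_j}^{1/2}$ passes to the limit by norm-continuity of multiplication, and no range-inclusion issue can arise; the Douglas-lemma refinement you mention is only relevant in infinite dimensions, which lies outside the scope of the statement as used in this paper.
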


The next lemma is known as Horn's Theorem.
%===================================================================================%
\begin{lemma}\cite[Corollary 10.3]{ZHA} \label{horn}
Let $A,B \in \mathcal{M}_n$. Then
$$\prod_{i=1}^ks_j(AB) \leq \prod_{i=1}^k\big(s_j(A)s_j(B)\big)\,\,\,\,\,\,(k=1, 2, \ldots, n)$$
\end{lemma}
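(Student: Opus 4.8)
The plan is to reduce this multiplicative singular-value inequality to the submultiplicativity of the operator norm $\|\cdot\|=s_1(\cdot)$ by passing to the $k$-th exterior (antisymmetric tensor) power, where the product $\prod_{j=1}^k s_j(\cdot)$ becomes a single largest singular value. (Note that the product is meant over the index matching the singular values, i.e. $\prod_{j=1}^k s_j$.)

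First I would introduce, for each $T \in \mathcal{M}_n$ and each $k \in \{1,\ldots,n\}$, the $k$-th compound matrix $\Lambda^k T \in \mathcal{M}_{\binom{n}{k}}$, namely the matrix of the induced linear map on the exterior power $\Lambda^k \mathbb{C}^n$ defined on decomposable elements by $(\Lambda^k T)(x_1 \wedge \cdots \wedge x_k) = Tx_1 \wedge \cdots \wedge Tx_k$. Two standard properties are needed. The construction is functorial, hence multiplicative: $\Lambda^k(AB)=(\Lambda^k A)(\Lambda^k B)$ for all $A,B \in \mathcal{M}_n$. Moreover, with respect to the induced inner product on $\Lambda^k \mathbb{C}^n$ it respects adjoints, so it sends unitaries to unitaries and diagonal matrices to diagonal matrices.

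The key step is the singular-value formula $s_1(\Lambda^k T)=\prod_{j=1}^k s_j(T)$. To establish it I would take a singular value decomposition $T = U\Sigma V^*$ with $U,V$ unitary and $\Sigma=\mathrm{diag}\big(s_1(T),\ldots,s_n(T)\big)$, and apply $\Lambda^k$ to obtain $\Lambda^k T = (\Lambda^k U)(\Lambda^k \Sigma)(\Lambda^k V)^*$. Since $\Lambda^k U$ and $\Lambda^k V$ are unitary, this is itself a singular value decomposition of $\Lambda^k T$, whose singular values are therefore the diagonal entries of $\Lambda^k \Sigma$, namely the products $s_{i_1}(T)\cdots s_{i_k}(T)$ over all $1 \le i_1 < \cdots < i_k \le n$. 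The largest of these is $\prod_{j=1}^k s_j(T)$, which yields the claimed formula.

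With these ingredients the conclusion is immediate: combining multiplicativity with the submultiplicativity $s_1(XY)\le s_1(X)s_1(Y)$ of the operator norm,
\begin{align*}
\prod_{j=1}^k s_j(AB) = s_1\big(\Lambda^k(AB)\big) = s_1\big((\Lambda^k A)(\Lambda^k B)\big) \le s_1(\Lambda^k A)\,s_1(\Lambda^k B) = \prod_{j=1}^k s_j(A)\,\prod_{j=1}^k s_j(B).
\end{align*}
The main obstacle is the verification of the singular-value formula for $\Lambda^k T$, which rests on the fact that exterior powers preserve unitarity and adjoints; once that is granted, everything else is formal.
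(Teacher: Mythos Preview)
Your argument via antisymmetric tensor powers is correct and is in fact the classical proof of Horn's inequality. Note, however, that the paper does not supply its own proof of this lemma: it is quoted as a known result from \cite[Corollary~10.3]{ZHA}, so there is no in-paper argument to compare against. The proof you outline is precisely the standard one found in the cited reference (and in Bhatia's \textit{Matrix Analysis}), so in that sense your approach coincides with the source the authors rely on.
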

%===================================================================================%
The celebrated Stinespring dilation theorem \cite[Theorem 1]{STI}
states that for any unital completely positive map $\Phi:
\mathscr{A}\to \mathbb{B}(\mathscr{H})$ between $C^*$-algebras there
exist a Hilbert space $\mathscr{K}$, an isometry $V: \mathscr{H}\to
\mathscr{K}$ and a unital $*$-homomorphism $\pi: \mathscr{A}\to
\mathbb{B}(\mathscr{K})$ such that $\Phi(T)=V^*\pi (T)V$ for all $T
\in \mathscr{A}$; see also \cite{ACM} and reference therein. We
assume that $\mathscr{K}$ is the closure of
$\pi(\mathscr{A})V\mathscr{H}$ and then we get the minimal
Stinespring representation which is unique up to a unitary
equivalence. Moreover, if $\dim(\mathscr{A})=k$ and $\dim(\mathscr{H})=n$, then
$\dim(\mathscr{K}) \le kn$. The equality occurs if $\mathscr{A}= \mathcal{M}_m$ for some $m$, see \cite[Theorem 3.1.2]{bhatiab}. Therefore we deduce that $$|||I_{\dim(\mathscr{K})}||| \leq |||I_{kn}|||\,,$$
since, by the Fan dominance theorem and Weyl's monotonicity theorem \cite[p. 63]{bhatiaa}, a sufficient condition to have $|||A||| \leq |||B|||$ is that $A\leq B$. Thus it is meaningful to deal with singular values of elements of
$\mathbb{B}(\mathscr{K})$.

We are ready to establish our first main result. The first part is a
Kantorovich additive type inequality and the second is a Gr\"uss
type one.

\begin{theorem}\label{main1}
Let $\mathscr{A}$ be a finite dimensional $C^*$-algebra of dimension
$k$ and $\Phi: \mathscr{A} \to \mathcal{M}_n$ be a unital completely
positive map. Then
\begin{eqnarray*}
\hspace{-1.9 in}{\rm (i)~}
|||\Phi(A^*A)-\Phi(A^*)\Phi(A)|||^{\frac{1}{2}} \leq
\frac{1}{2}\sqrt{|||I_{kn}|||}d_A\,
\end{eqnarray*}
for all $A \in
\mathscr{A}$.
\begin{eqnarray*}
\hspace{-1.75in}{\rm (ii)~} |||\Phi(AB)-\Phi(A)\Phi(B)||| \leq
 \frac{1}{4} |||I_{n}|||\,|||I_{kn}|||d_A d_B
\end{eqnarray*}
for all $A,B \in \mathscr{A}$.
\end{theorem}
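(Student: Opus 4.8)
The plan is to run both parts through a minimal Stinespring dilation $\Phi(\cdot)=V^{*}\pi(\cdot)V$ of the kind recalled just before the statement, where $V\colon\mathscr H\to\mathscr K$ is an isometry, $\pi\colon\mathscr A\to\mathbb B(\mathscr K)$ a unital $*$-homomorphism, and $\dim\mathscr K\le kn$. Set $P=I_{\mathscr K}-VV^{*}$, which is a projection with $PV=0$ because $V^{*}V=I_{\mathscr H}$, and for $T\in\mathscr A$ put $X_{T}=P\pi(T)V$. Two facts carry everything. First, for all $S,T\in\mathscr A$,
\[
X_{S}^{*}X_{T}=V^{*}\pi(S^{*})P\pi(T)V=V^{*}\pi(S^{*}T)V-V^{*}\pi(S^{*})VV^{*}\pi(T)V=\Phi(S^{*}T)-\Phi(S^{*})\Phi(T).
\]
Second, $\|X_{T}\|\le d_{T}/2$ for every $T$: since $PV=0$ we may replace $T$ by $T-\lambda I$ for any $\lambda\in\mathbb C$ without changing $X_{T}$, and $\pi$ is contractive, so $\|X_{T}\|\le\inf_{\lambda}\|T-\lambda I\|=\Delta(T,\mathbb C I)=d_{T}/2$.

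For part (i), take $S=T=A$, so that $\Phi(A^{*}A)-\Phi(A^{*})\Phi(A)=X_{A}^{*}X_{A}\ge 0$. Passing to $X_{A}X_{A}^{*}$, which has the same unitarily invariant norm (after padding with zeros as in \eqref{notation}) and satisfies $X_{A}X_{A}^{*}\le\|X_{A}\|^{2}I_{\mathscr K}$, Weyl's monotonicity theorem and the Fan dominance theorem give $|||X_{A}^{*}X_{A}|||=|||X_{A}X_{A}^{*}|||\le\|X_{A}\|^{2}\,|||I_{\dim\mathscr K}|||\le(d_{A}/2)^{2}|||I_{kn}|||$, and taking square roots proves (i). (Bounding $X_{A}^{*}X_{A}\le\|X_{A}\|^{2}I_{n}$ directly would even give the sharper $|||I_{n}|||$.)

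For part (ii), take $S=A^{*}$ and $T=B$, so that $M:=\Phi(AB)-\Phi(A)\Phi(B)=X_{A^{*}}^{*}X_{B}$, and note that the Gram-type block matrix
\[
\begin{pmatrix}X_{A^{*}}^{*}X_{A^{*}} & M\\ M^{*} & X_{B}^{*}X_{B}\end{pmatrix}=\begin{pmatrix}X_{A^{*}}^{*}\\ X_{B}^{*}\end{pmatrix}\begin{pmatrix}X_{A^{*}} & X_{B}\end{pmatrix}
\]
is positive. Writing $G=X_{A^{*}}^{*}X_{A^{*}}=\Phi(AA^{*})-\Phi(A)\Phi(A^{*})\ge0$ and $H=X_{B}^{*}X_{B}=\Phi(B^{*}B)-\Phi(B^{*})\Phi(B)\ge0$, Lemma \ref{cont} yields a contraction $K\in\mathcal M_{n}$ with $M=G^{1/2}KH^{1/2}$. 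By Lemma \ref{lem3.1}(ii) and then Lemma \ref{q},
\[
|||M|||\le\|G^{1/2}\|\,|||K|||\,\|H^{1/2}\|\le\|G^{1/2}\|\,|||I_{n}|||\,\|H^{1/2}\|,
\]
and since the operator norm is dominated by $|||\cdot|||$, part (i) applied to $A^{*}$ and to $B$ (recall $d_{A^{*}}=d_{A}$) gives $\|G^{1/2}\|=\|G\|^{1/2}\le|||G|||^{1/2}\le\tfrac12\sqrt{|||I_{kn}|||}\,d_{A}$ and, likewise, $\|H^{1/2}\|\le\tfrac12\sqrt{|||I_{kn}|||}\,d_{B}$. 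Multiplying the three bounds produces $|||\Phi(AB)-\Phi(A)\Phi(B)|||=|||M|||\le\tfrac14|||I_{n}|||\,|||I_{kn}|||\,d_{A}d_{B}$. (Alternatively, Horn's theorem, Lemma \ref{horn}, applied to $M=X_{A^{*}}^{*}X_{B}$ already gives the sharper $|||M|||\le\tfrac14|||I_{n}|||\,d_{A}d_{B}$.)

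The Stinespring computation, the identity for $X_{S}^{*}X_{T}$, and the positivity of the Gram matrix are routine. The step I expect to need the most care is the final chain of unitarily invariant norm estimates: one wants exactly one factor $|||I_{n}|||$ (coming from Lemma \ref{q} applied to the contraction $K$) together with one $\sqrt{|||I_{kn}|||}$ for each of $\|G^{1/2}\|$ and $\|H^{1/2}\|$ (fed in from part (i)), and one must keep the adjoint substitution $A\mapsto A^{*}$ explicit so that only the orbit diameters $d_{A},d_{B}$ remain in the end — which is harmless, since $d_{X^{*}}=d_{X}$.
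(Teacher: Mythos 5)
Your proof is correct, and while it rests on the same skeleton as the paper's argument --- the minimal Stinespring dilation $\Phi(\cdot)=V^*\pi(\cdot)V$, the identity $\Phi(S^*T)-\Phi(S^*)\Phi(T)=V^*\pi(S^*)(I-VV^*)\pi(T)V$, positivity of the $2\times 2$ block matrix, the contraction factorization from Lemma \ref{cont}, and then Lemmas \ref{lem3.1}(ii) and \ref{q} --- it streamlines two of the paper's sub-arguments in a genuinely different way. First, you obtain the positivity of the block matrix \eqref{eq1} instantly as a Gram matrix $\bigl(\begin{smallmatrix}X_{A^*}^*\\ X_B^*\end{smallmatrix}\bigr)\bigl(X_{A^*}\ X_B\bigr)\ge 0$, whereas the paper derives it by sandwiching $VV^*\le I$ between block operators via Lemma \ref{AAAA} and conjugating by $V\oplus V$. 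Second, in part (i) you replace the paper's route through Horn's theorem (Lemma \ref{horn}), the passage from weak log-majorization to weak majorization, and the estimate of $|||\pi(|A-\lambda I|^2)|||$, by the one-line operator inequality $X_AX_A^*\le\|X_A\|^2 I$ followed by Weyl monotonicity and Fan dominance; this is more elementary and, as your parenthetical remarks correctly observe, even yields the sharper constants $|||I_n|||$ in (i) (and hence $\tfrac14|||I_n|||\,d_Ad_B$ in (ii)) than the stated $|||I_{kn}|||$. One point to flag, though it is not a gap relative to the paper: your step ``the operator norm is dominated by $|||\cdot|||$'' uses the normalization $g(1,0,\ldots)=1$ implicit in the Gohberg--Krein definition of a symmetric norming function; the paper's own proof of (ii) relies on exactly the same fact when it bounds $\|\Phi(A^*A)-\Phi(A^*)\Phi(A)\|^{1/2}$ ``by part (i)'', so the two arguments stand or fall together on that convention.
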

\begin{proof} (i) By using the Stinespring dilation theorem the positivity of
\begin{equation}
\left(
\begin{array}{cc}
\Phi(A^*A)-\Phi(A^*)\Phi(A) & \Phi(A^*B)-\Phi(A^{*
})\Phi(B) \\
\Phi(B^*A)-\Phi(B^*)\Phi(A) & \Phi(B^*B)-\Phi(B^{*
})\Phi(B)%
\end{array}%
\right) \label{eq1}
\end{equation}
will follow once we prove the positivity of
{\small\begin{align} \label{eq2}
\left(
\begin{array}{cc}
V^*\pi(A^*A)V-V^*\pi(A^*)VV^*\pi(A)V & V^*\pi(A^{*
}B)V-V^*\pi(A^*)VV^*\pi(B)V \\
V^*\pi(B^*A)V-V^*\pi(B^*)VV^*\pi(A)V & V^*\pi(B^{*
}B)V-V^*\pi(B^*)VV^*\pi(B)V%
\end{array}%
\right).
\end{align} }
As $V$ is an isometry, we have $VV^*\leq
I_{\dim(\mathscr{K})}$. It follows from Lemma \ref{AAAA} that
$$\begin{pmatrix}
VV^* & VV^*\\VV^* & VV^*
\end{pmatrix} \leq \begin{pmatrix}
I_{\dim(\mathscr{K})}& I_{\dim(\mathscr{K})}\\
I_{\dim(\mathscr{K})}& I_{\dim(\mathscr{K})}
\end{pmatrix}\,.$$
Hence \begin{align*}\begin{pmatrix}
\pi(A)^*& 0\\
0 & \pi(B)^*
\end{pmatrix}&\begin{pmatrix}
VV^* & VV^*\\VV^* & VV^*
\end{pmatrix}\begin{pmatrix}
\pi(A)& 0\\
0 & \pi(B)
\end{pmatrix}\\
&\leq \begin{pmatrix}
\pi(A)^*& 0\\
0 & \pi(B)^*
\end{pmatrix}\begin{pmatrix}
I_{\dim(\mathscr{K})}& I_{\dim(\mathscr{K})}\\
I_{\dim(\mathscr{K})}& I_{\dim(\mathscr{K})}
\end{pmatrix}\begin{pmatrix}
\pi(A)& 0\\
0 & \pi(B)
\end{pmatrix}\,,
\end{align*}
whence
\begin{equation}
\begin{pmatrix}
\pi(A)^*VV^*\pi(A) & \pi(A)^*VV^*\pi(B)\\
\pi(B)^*VV^*\pi(A) & \pi(B)^*VV^*\pi(B)
\end{pmatrix} \leq \begin{pmatrix}
\pi(A^*A) & \pi(A^*B)\\
\pi(B^*A) & \pi(B^*B)
\end{pmatrix}.
\label{eq233}
\end{equation}
The positivity of (\ref{eq2}) follows by pre-multiplying
(\ref{eq233}) by $\begin{pmatrix}
V^* & 0\\
0 & V^*
\end{pmatrix}$ and post-multiplying by $\begin{pmatrix}
V & 0\\
0 & V
\end{pmatrix}$.
It is notable that the positivity of \eqref{eq1} implies the
positivity of its $(1,1)$ entry:
\begin{equation*}
\Phi(A^*A)-\Phi(A^*)\Phi(A)\geq 0\qquad \mbox{(the so-called Kadison
inequality)}.
\end{equation*}%
Utilizing the Stinespring theorem we have
\begin{align*}
 \Phi(A^*A)-&\Phi(A^*)\Phi(A)\\
	&= V^*\pi(A^*A)V-V^* \pi(A^*)VV^* \pi(A)V\\
 &=V^*\pi\big((A-\lambda I)^*(A-\lambda I)\big)V-V^* \pi(A-\lambda I)^*VV^* \pi(A-\lambda I) V\\
&=V^* \pi(A-\lambda I)^*(I_{\dim(\mathscr{K})}-VV^*) \pi(A-\lambda I)V
\end{align*}
for every $\lambda \in \mathbb{C}$.\\
Note that $I_{\dim(\mathscr{K})}-VV^*$ is a
projection and $\pi$ is a $*$ -homomorphism, hence
\begin{align}\label{log}
&\prod_{j=1}^{k}s_j\Big(\Phi(A^*A)-\Phi(A^*)\Phi(A)\Big)\nonumber\\
&= \prod_{j=1}^{k}s_j\Big( V^*\pi(A-\lambda I)^*(I_{\dim(\mathscr{K})}-VV^*)\pi(A-\lambda I)V \Big)\nonumber\\
&\leq \prod_{j=1}^{k}\Big[s_j\Big( V^*\pi(A-\lambda
I)^*(I_{\dim(\mathscr{K})}-VV^*)\Big)\,s_j\Big((I_{\dim(\mathscr{K})}-VV^*)\pi(A-\lambda
I)V \Big)\Big]\nonumber\\
&\qquad\qquad\qquad\qquad\qquad\qquad\qquad\qquad \qquad \qquad \qquad(\mbox{by Lemma \ref{horn}})\nonumber\\
&\leq \prod_{j=1}^{k}\Big[s_j\big(\pi(A-\lambda
I)^*\big)\,s_j\big(\pi(A-\lambda
I) \big)\Big]\qquad\qquad(\mbox{by Lemma \ref{lem3.1} (i)})\nonumber\\
&= \prod_{j=1}^{k}s_j(\pi(|A-\lambda I|^2))\nonumber\\
&\qquad\quad(\mbox{since eigenvalues of matrices $XY$
and $YX$ are the same})\,.
\end{align}
for all $k=1,2,\ldots, n$ and $\lambda \in \mathbb{C}$. Since the
weak log-majorization inequality implies the weak majorization
inequality (cf. \cite[Theorem 10.15]{ZHA}), we get from \eqref{log}
that
\begin{align*}
\sum_{j=1}^{k}s_j\left(\Phi(A^*A)-\Phi(A^*)\Phi(A)\right)\leq
\sum_{j=1}^{k}s_j(\pi(|A-\lambda I|^2))\qquad (k=1,2,\ldots, n)\,.
\end{align*}
Thus, by using Lemma \ref{lem3.1} (ii), we reach
\begin{align*}
|||\Phi(A^*A)-\Phi(A^*)\Phi(A)||| &\leq |||\pi(|A-\lambda I|^2)|||\\
& = |||\pi(|A-\lambda I|)\,I_{\dim(\mathscr{K})}\, \pi(|A-\lambda I|)|||\\
& \leq \|\pi(|A-\lambda I|)\|\,\,|||I_{\dim(\mathscr{K})}|||\,\,\|\pi(|A-\lambda
I|)\| \\
& \leq \|A-\lambda I\|^2\,|||I_{kn}|||\quad
(\mbox{since $\pi$ is norm decreasing})\,.
\end{align*}
Therefore $$|||\Phi(A^*A)-\Phi(A^*)\Phi(A)|||^{\frac{1}{2}} \leq
\sqrt{|||I_{kn}|||} \inf _{\lambda \in \mathbb{C}}\| A-\lambda
I\|=\sqrt{|||I_{kn}|||}d_A\,.$$

(ii) Since (\ref{eq1}) is positive, by Lemma \ref{cont}, there
exists a contraction $K\in \mathcal{M}_n$ such
that%
{\small \begin{align*}
\Phi(A^*B)-\Phi(A^*)\Phi(B)=\left( \Phi(A^*A)-\Phi(A^{* })\Phi
(A)\right) ^{\frac{1}{2}}K\left( \Phi(B^*B)-\Phi(B^*)\Phi(B)\right)
^{\frac{1}{2}}.
\end{align*}}
It follows that
\begin{align*}
|||\Phi(A^*B)-&\Phi(A^*)\Phi(B)|||\\
 &=|||\left( \Phi(A^*A)-\Phi(A^{*
})\Phi (A)\right) ^{\frac{1}{2}}K\left(
\Phi(B^*B)-\Phi(B^*)\Phi(B)\right)
^{\frac{1}{2}}|||\\
& \leq \|\Phi(A^*A)-\Phi(A^{* })\Phi (A)\|
^{\frac{1}{2}}|||K|||\,\,\|
\Phi(B^*B)-\Phi(B^*)\Phi(B)\|^{\frac{1}{2}}\\
&\qquad\qquad\qquad\qquad\qquad\qquad\qquad\qquad\qquad(\mbox{by Lemma \ref{lem3.1} (ii)})\\
& \leq \|\Phi(A^*A)-\Phi(A^{* })\Phi (A)\|
^{\frac{1}{2}}|||I_{n}|||\,\,\|
\Phi(B^*B)-\Phi(B^*)\Phi(B)\|^{\frac{1}{2}}\\
&\qquad\qquad\qquad\qquad\qquad\qquad\qquad\qquad\qquad(\mbox{by Lemma \ref{q}})\\
&\leq |||I_{n}|||\,|||I_{kn}||| \inf _{\lambda \in \mathbb{C}}\|
A-\lambda I\|\inf _{\mu \in \mathbb{C}}\| B-\mu I\|\qquad(\mbox{by
part (i)})\\
&= \frac{1}{4} |||I_{n}|||\,|||I_{kn}||| d_A d_B\,.
\end{align*}
The result follows by replacing $A^*$ by $A$ in the last inequality.
\end{proof}
%===================================================================================%
As a consequence we get the following Gr\"uss inequalities for some known unitarily invariant norms.

\begin{corollary}
If $\Phi: \mathcal{M}_m \to \mathcal{M}_n$ is a unital completely
positive map, then
\[
\| \Phi(AB)-\Phi(A)\Phi(B)\| \leq \frac{1}{4} \max_{\|X\|=1,
\|Y\|=1} \|AX-XA\|~ \|BY-YB\|\leq \frac{1}{4} d_A d_B\,.
\]
and
\[
\|\Phi(AB)-\Phi(A)\Phi(B)\|_p \leq \frac {(mn)^{2/p}}{4}d_A d_B\, \qquad (p \geq 1)
\]
for all $A,B \in \mathcal{M}_m$.
\end{corollary}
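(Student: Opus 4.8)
The plan is to derive this corollary directly from Theorem \ref{main1}(ii) by specializing $\mathscr{A}=\mathcal{M}_m$ and then evaluating the abstract constant $\frac{1}{4}|||I_n|||\,|||I_{kn}|||$ for the two concrete choices of unitarily invariant norm. Note that when $\mathscr{A}=\mathcal{M}_m$ we have $\dim(\mathscr{A})=k=m^2$, so the general bound reads $|||\Phi(AB)-\Phi(A)\Phi(B)|||\leq\frac{1}{4}|||I_n|||\,|||I_{m^2 n}|||\,d_A d_B$; however, this is wasteful, because for $\mathscr{A}=\mathcal{M}_m$ the minimal Stinespring space has $\dim(\mathscr{K})\leq mn$ rather than $m^2 n$ (the remark preceding the theorem records exactly this: equality $\dim(\mathscr{K})=kn$ is attained only in a special case, and for full matrix algebras the sharper bound $mn$ holds via \cite[Theorem 3.1.2]{bhatiab}). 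So first I would rerun the estimate in the proof of part (i) with $kn$ replaced by $mn$, obtaining $|||\Phi(A^*A)-\Phi(A^*)\Phi(A)|||\leq\|A-\lambda I\|^2\,|||I_{mn}|||$ and hence, feeding this into part (ii), the bound $|||\Phi(AB)-\Phi(A)\Phi(B)|||\leq\frac{1}{4}|||I_n|||\,|||I_{mn}|||\,d_A d_B$ for all $A,B\in\mathcal{M}_m$.

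Next I would treat the two norms separately. For the operator norm $\|\cdot\|$: by the convention \eqref{notation}, $\|I_n\|=\|I_n\oplus 0\|=1$ and likewise $\|I_{mn}\|=1$, so the constant collapses to $\frac14$, giving $\|\Phi(AB)-\Phi(A)\Phi(B)\|\leq\frac14 d_A d_B$. To get the slightly stronger middle term $\frac14\max_{\|X\|=1,\|Y\|=1}\|AX-XA\|\,\|BY-YB\|$, I would go back one line in the proof of Theorem \ref{main1}(ii): the chain of inequalities actually produces $\|\Phi(A^*A)-\Phi(A^*)\Phi(A)\|^{1/2}\|\Phi(B^*B)-\Phi(B^*)\Phi(B)\|^{1/2}$, and part (i) with $\|\cdot\|$ gives $\|\Phi(A^*A)-\Phi(A^*)\Phi(A)\|^{1/2}\leq\inf_\lambda\|A-\lambda I\|=\frac12 d_A=\frac12\max_{\|X\|=1}\|AX-XA\|$; multiplying the two such factors and then substituting $A$ for $A^*$ yields the displayed middle expression, and the final $\leq\frac14 d_A d_B$ is just $\max\leq$ supremum over the common variable, i.e. the trivial bound $\|AX-XA\|\leq d_A$.

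For the Schatten $p$-norm: here $\|I_n\|_p=\big(\sum_{j=1}^n 1\big)^{1/p}=n^{1/p}$ and, using \eqref{notation}, $\|I_{mn}\|_p=\|I_{mn}\oplus 0\|_p=(mn)^{1/p}$ (the appended zeros contribute nothing to the singular values). Plugging these into $\frac14\|I_n\|_p\,\|I_{mn}\|_p\,d_A d_B$ gives $\frac14\,n^{1/p}(mn)^{1/p}d_A d_B=\frac14\,m^{1/p}n^{2/p}d_A d_B$, which is at most $\frac{(mn)^{2/p}}{4}d_A d_B$ since $m^{1/p}\leq m^{2/p}$ for $m\geq 1$; alternatively one states the bound with $m^{1/p}n^{2/p}$ and weakens to $(mn)^{2/p}$ to match the statement.

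I do not anticipate a serious obstacle: the only subtlety is making sure the convention $|||A|||=|||A\oplus 0|||$ is invoked correctly so that $\|I_k\|_p=k^{1/p}$ (independent of the ambient Hilbert space) and $\|I_k\|=1$, and double-checking that the refinement from $kn$ to $mn$ in the Stinespring dimension count is legitimate for $\mathscr{A}=\mathcal{M}_m$ — but this is precisely what the remark before Theorem \ref{main1} provides. Everything else is substitution into an inequality already proved.
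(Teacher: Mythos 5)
The core of your argument --- specialize Theorem \ref{main1}(ii) to $\mathscr{A}=\mathcal{M}_m$ (so $k=\dim\mathcal{M}_m=m^2$) and evaluate $|||I_n|||$ and $|||I_{kn}|||$ for the operator and Schatten norms --- is exactly what the paper does, and your handling of the operator-norm case, including the middle term $\frac{1}{4}\max_{\|X\|=1,\|Y\|=1}\|AX-XA\|\,\|BY-YB\|$ (which in fact equals $\frac{1}{4}d_Ad_B$, since the maxima factor over the independent variables $X$ and $Y$), is fine. The flaw is the ``refinement'' in which you replace $|||I_{m^2n}|||$ by $|||I_{mn}|||$. You have read the remark preceding Theorem \ref{main1} backwards: the paper asserts that the equality $\dim(\mathscr{K})=kn$ \emph{does} occur when $\mathscr{A}$ is a full matrix algebra, not that full matrix algebras admit a smaller dilation space. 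Indeed the claim $\dim(\mathscr{K})\le mn$ is false: for a unital completely positive $\Phi:\mathcal{M}_m\to\mathcal{M}_n$ the minimal Stinespring space has dimension $m\cdot r$, where $r\le mn$ is the Choi rank, and $r=mn$ is attained, e.g.\ by the completely depolarizing map $\Phi(A)=\frac{\operatorname{tr}(A)}{m}\,I_n$, whose minimal dilation space has dimension $m^2n$. Consequently your intermediate bound $\frac{1}{4}\|I_n\|_p\,\|I_{mn}\|_p\,d_Ad_B=\frac{1}{4}m^{1/p}n^{2/p}\,d_Ad_B$ is not justified by the argument you give.

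Fortunately the detour is unnecessary and the corollary is untouched: the bound you dismissed as ``wasteful'' is precisely the stated one, because with $k=m^2$ one has $\|I_n\|_p\,\|I_{m^2n}\|_p=n^{1/p}(m^2n)^{1/p}=(mn)^{2/p}$, while for the operator norm $\|I_n\|=\|I_{m^2n}\|=1$. Deleting the paragraph about $\dim(\mathscr{K})\le mn$ and quoting Theorem \ref{main1}(ii) with $k=m^2$ verbatim yields the paper's proof.
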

\begin{proof}
First observe that $\dim(\mathcal{M}_m)=m^2$. Second note that the
operator norm $\|\cdot\|$ and the Schatten $p$-norm $\|\cdot\|_p$
whenever $p \geq 1$ are unitarily invariant norms as well as
$\|I_k\|_p=k^{1/p}$ for every positive integer $k \geq 1$. It is now
sufficient to use Theorem \ref{main1}.
\end{proof}

If $A$ is self-adjoint, with $mI\leq A \leq MI$ for some real
numbers $m,M$, then $d_A = M-m$. As a consequence of Theorem
\ref{main1} we have the following result.
%===================================================================================%

\begin{corollary}\label{cor2}
Let $\Phi: \mathcal{M}_m \to \mathcal{M}_n$ be a unital completely
positive map and $A,B \in \mathcal{M}_n$ be Hermitian matrices with
$mI_m\leq A \le MI_m,~m'I_m\leq B \leq M'I_m$ for some constants $m,
m', M, M'$. Then
\begin{equation*}
|||\Phi(AB)-\Phi(A)\Phi(B)||| \leq \frac{1}{4}
(M-m)(M'-m')|||I_n|||\,|||I_{m^2n}|||\,.
\end{equation*}
\end{corollary}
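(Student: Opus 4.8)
The plan is to obtain Corollary \ref{cor2} as an immediate specialization of Theorem \ref{main1}(ii), applied with the domain algebra $\mathscr{A}=\mathcal{M}_m$. Indeed, $\mathcal{M}_m$ is a unital $C^*$-algebra of finite complex dimension $k=m^2$, and $\Phi$ is assumed unital completely positive, so Theorem \ref{main1}(ii) applies verbatim and gives
\[
|||\Phi(AB)-\Phi(A)\Phi(B)||| \leq \frac{1}{4}\,|||I_n|||\,|||I_{m^2 n}|||\, d_A\, d_B
\]
for all $A,B\in\mathcal{M}_m$. Thus the entire task reduces to estimating the orbit diameters $d_A$ and $d_B$ under the given spectral bounds.

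First I would recall from the preliminaries that $d_X=2\Delta(X,\mathbb{C}I)=2\inf_{\lambda\in\mathbb{C}}\|X-\lambda I\|$. For a Hermitian $A$ with $mI_m\leq A\leq MI_m$, the spectrum of $A$ is contained in $[m,M]$, so for the scalar $\lambda=(M+m)/2$ one has $\|A-\lambda I_m\|=\max_{t\in\mathrm{spec}(A)}|t-\lambda|\leq (M-m)/2$, and hence $d_A=2\Delta(A,\mathbb{C}I_m)\leq M-m$. This is precisely the remark recorded immediately before the statement, so one may simply invoke it; the same reasoning yields $d_B\leq M'-m'$.

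Substituting $d_A\leq M-m$ and $d_B\leq M'-m'$ into the displayed inequality produces exactly the bound $\frac{1}{4}(M-m)(M'-m')|||I_n|||\,|||I_{m^2 n}|||$, completing the proof. There is no genuine obstacle here: the only step that merits an explicit line is the estimate $d_A\leq M-m$ for a bounded Hermitian matrix, which is the standard computation of the operator-norm distance of a self-adjoint element from the scalar operators (minimized at the midpoint of the enclosing interval).
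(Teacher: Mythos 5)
Your proposal is correct and follows essentially the same route as the paper: the corollary is obtained by specializing Theorem \ref{main1}(ii) to $\mathscr{A}=\mathcal{M}_m$ (so $k=m^2$) and using the observation, recorded just before the statement, that a Hermitian matrix with $mI_m\leq A\leq MI_m$ satisfies $d_A\leq M-m$ (similarly $d_B\leq M'-m'$). Your explicit midpoint computation $\|A-\tfrac{M+m}{2}I_m\|\leq\tfrac{M-m}{2}$ is exactly the justification the paper leaves implicit.
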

%===================================================================================%
In \cite{M-R} we estimate the operator norm of
$\Phi(AB)-\Phi(A)\Phi(B)$ for an $n$-positive linear map $\Phi$. Now
we estimate any its unitarily invariant norm. We need the next two
lemmas. The first is an equivalent version of Lemma \ref{cont}.

\begin{lemma}\cite{AND} \label{lemrb}
Let $C\in \mathbb{B}(\mathscr{H}_1), D \in
\mathbb{B}(\mathscr{H}_2)$ be positive and $D$ be invertible. Then
the block matrix $\begin{pmatrix} C & X
\\X^* & D
\end{pmatrix}$ is positive if and only if $C \geq XD^{-1} X^*$.
\end{lemma}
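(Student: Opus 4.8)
The plan is to prove this through the standard Schur-complement factorization of a $2\times 2$ operator block matrix. Write $M=\begin{pmatrix} C & X \\ X^* & D \end{pmatrix}$, regarded as a bounded operator on $\mathscr{H}_1\oplus\mathscr{H}_2$; for the block form to be meaningful we have $X\in\mathbb{B}(\mathscr{H}_2,\mathscr{H}_1)$ and $X^*\in\mathbb{B}(\mathscr{H}_1,\mathscr{H}_2)$. Since $D$ is positive and invertible, $D^{-1}\in\mathbb{B}(\mathscr{H}_2)$ is a bounded positive operator (by the bounded inverse theorem), so the Schur complement $S:=C-XD^{-1}X^*$ is a well-defined bounded self-adjoint operator on $\mathscr{H}_1$, and the assertion to be proved is exactly that $M\geq 0$ if and only if $S\geq 0$.

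First I would introduce the invertible operator $T:=\begin{pmatrix} I & XD^{-1} \\ 0 & I \end{pmatrix}\in\mathbb{B}(\mathscr{H}_1\oplus\mathscr{H}_2)$, whose inverse is $\begin{pmatrix} I & -XD^{-1} \\ 0 & I \end{pmatrix}$ and whose adjoint is $T^*=\begin{pmatrix} I & 0 \\ D^{-1}X^* & I \end{pmatrix}$ (using $(D^{-1})^*=D^{-1}$). A direct block multiplication then yields the factorization
$$M=T\begin{pmatrix} S & 0 \\ 0 & D \end{pmatrix}T^*,$$
since $D\,D^{-1}=I$ and $S+XD^{-1}X^*=C$. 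This identity is the only genuine computation in the argument.

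Next I would invoke the elementary observation that, for an invertible $T$, an operator $N$ satisfies $N\geq 0$ if and only if $TNT^*\geq 0$: indeed $\langle TNT^*\xi,\xi\rangle=\langle N(T^*\xi),T^*\xi\rangle$ and $T^*$ is a bijection on $\mathscr{H}_1\oplus\mathscr{H}_2$. Applying this with $N=\begin{pmatrix} S & 0 \\ 0 & D \end{pmatrix}$ shows that $M\geq 0$ if and only if $S\oplus D\geq 0$, i.e. if and only if $S\geq 0$ and $D\geq 0$; as $D\geq 0$ holds by hypothesis, this is equivalent to $S\geq 0$, that is, $C\geq XD^{-1}X^*$, which is the claim.

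There is essentially no obstacle beyond bookkeeping: one must keep track of which Hilbert space each block acts on and note the boundedness of $D^{-1}$; the substance is the factorization identity, and the rest is a one-line congruence argument. As an alternative that avoids the factorization, one could argue directly by completing the square: for fixed $\xi\in\mathscr{H}_1$ the form $\eta\mapsto\langle M(\xi\oplus\eta),\xi\oplus\eta\rangle$ attains its minimum over $\eta\in\mathscr{H}_2$ at $\eta=-D^{-1}X^*\xi$, with minimum value $\langle(C-XD^{-1}X^*)\xi,\xi\rangle$, so $M\geq 0$ is equivalent to the nonnegativity of this minimum for every $\xi$. I would prefer the factorization version, as it is cleaner and records the useful identity $M=T(S\oplus D)T^*$. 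Finally, it is worth noting that this recovers Lemma~\ref{cont} when $D$ is invertible: setting $Y=XD^{-1/2}$, the condition $C\geq XD^{-1}X^*$ reads $C\geq YY^*$, which by Douglas's factorization lemma is equivalent to $Y=C^{1/2}K$ for some contraction $K$, i.e. $X=C^{1/2}KD^{1/2}$.
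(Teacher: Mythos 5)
Your proof is correct. Note, though, that the paper does not prove Lemma~\ref{lemrb} at all: it is quoted from Ando's lecture notes and merely described as ``an equivalent version of Lemma~\ref{cont}'' (the contraction characterization $X=A^{1/2}KB^{1/2}$ taken from Bhatia), so there is no in-paper argument to compare against. Your Schur-complement route is the standard self-contained proof and is sound as written: the factorization
\begin{equation*}
\begin{pmatrix} C & X \\ X^* & D \end{pmatrix}
=\begin{pmatrix} I & XD^{-1} \\ 0 & I \end{pmatrix}
\begin{pmatrix} C-XD^{-1}X^* & 0 \\ 0 & D \end{pmatrix}
\begin{pmatrix} I & 0 \\ D^{-1}X^* & I \end{pmatrix}
\end{equation*}
checks out, the congruence-by-an-invertible-operator step is justified exactly as you say (surjectivity of $T^*$ is what makes the ``only if'' direction work), and the boundedness and positivity of $D^{-1}$ are correctly accounted for. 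Your closing observation that, for invertible $D$, the condition $C\geq XD^{-1}X^*$ recovers the contraction form of Lemma~\ref{cont} via Douglas's factorization is accurate and makes the paper's claim of equivalence explicit, which is a useful bonus beyond what the lemma itself requires.
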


\begin{lemma} \label{ppp} For any matrix $X \in \mathcal{M}_n$,
\begin{eqnarray*}
\left\|\begin{bmatrix}0& X\\ X^*& 0\end{bmatrix}\right\|= \|X\|\,.
\end{eqnarray*}
\end{lemma}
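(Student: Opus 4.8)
The plan is to compute the operator norm of the self-adjoint block matrix $T = \begin{bmatrix} 0 & X \\ X^* & 0 \end{bmatrix}$ by identifying its singular values (equivalently, the absolute values of its eigenvalues, since $T$ is Hermitian) with those of $X$. First I would observe that $T^2 = \begin{bmatrix} XX^* & 0 \\ 0 & X^*X \end{bmatrix}$, a direct computation from block multiplication. Since $T$ is self-adjoint, $\|T\| = \|T^2\|^{1/2}$, and because the operator norm of a block-diagonal matrix is the maximum of the norms of its diagonal blocks, $\|T^2\| = \max\{\|XX^*\|, \|X^*X\|\} = \|X^*X\| = \|X\|^2$. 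Taking square roots gives $\|T\| = \|X\|$.

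Alternatively, and perhaps more cleanly for the singular-value level statement, I would note that $T$ admits the factorization $T = W \begin{bmatrix} 0 & |X| \\ |X| & 0 \end{bmatrix} W^*$ for a suitable unitary built from the polar decomposition $X = U|X|$, or one can simply diagonalize: if $|X|$ has an orthonormal eigenbasis $\{e_j\}$ with $|X| e_j = s_j e_j$, then the vectors $(e_j, \pm U^* U e_j)$ (appropriately normalized, and with care on the kernel of $X$) are eigenvectors of $T$ with eigenvalues $\pm s_j$. Either route shows the nonzero singular values of $T$ are exactly the nonzero singular values of $X$, each appearing with doubled multiplicity, so in particular $s_1(T) = s_1(X)$, which is the claimed norm identity.

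I do not anticipate a genuine obstacle here; this is a standard fact (the ``$2\times 2$ dilation trick''). The only point requiring a little care is the treatment of the kernel of $X$ when writing down eigenvectors explicitly, which is why the $T^2$ computation is the slicker argument and the one I would actually present in the paper. No result beyond elementary block-matrix manipulation and the identity $\|T\| = \|T^2\|^{1/2}$ for self-adjoint $T$ is needed.
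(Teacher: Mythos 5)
Your argument is correct, but it goes by a different route than the paper. You square the self-adjoint matrix $T=\begin{bmatrix}0& X\\ X^*& 0\end{bmatrix}$, observe $T^2=\begin{bmatrix}XX^*&0\\0&X^*X\end{bmatrix}$, and use the identity $\|T\|=\|T^2\|^{1/2}$ for Hermitian $T$ together with $\|XX^*\|=\|X^*X\|=\|X\|^2$; this is sound. The paper instead multiplies $T$ on the right by the flip unitary $\begin{bmatrix}0&I_n\\ I_n&0\end{bmatrix}$, which converts it into the block-diagonal matrix $\begin{bmatrix}X&0\\0&X^*\end{bmatrix}$, and then invokes the fact (recorded in the preliminaries) that $\|A\oplus B\|=\max\{\|A\|,\|B\|\}$ together with unitary invariance of the norm. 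The paper's one-line trick has the advantage of not needing self-adjointness of $T$ at all and of exhibiting the full singular value list of $T$ as that of $X\oplus X^*$, so the same manipulation works verbatim for any unitarily invariant norm; this is exactly the flip-multiplication device reused later in the proof of Theorem \ref{the2}. Your $T^2$ computation buys the operator-norm statement just as quickly, and your remark that the nonzero singular values of $T$ are those of $X$ with doubled multiplicity is also recoverable from it. One small slip in your alternative eigenvector route: with the polar decomposition $X=U|X|$ and $|X|e_j=s_je_j$, the eigenvectors of $T$ are (up to normalization) $(Ue_j,\pm e_j)$ with eigenvalues $\pm s_j$, not $(e_j,\pm U^*Ue_j)$; since you defer to the $T^2$ argument anyway, this does not affect your proof.
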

\begin{proof}
\begin{align*}
\left\|\begin{bmatrix} 0& X\\ X^*&
0\end{bmatrix}\right\|&=\left\|\begin{bmatrix} I_n& 0\\0&
I_n\end{bmatrix}\begin{bmatrix} 0& X\\ X^*&
0\end{bmatrix}\begin{bmatrix} 0 &I_n\\I_n &
0\end{bmatrix}\right\| \\
&=\left\|\begin{bmatrix} X& 0\\0&
X^*\end{bmatrix}\right\|=\max\{\|X\|,\|X^*\|\}=\|X\|\,.
\end{align*}
\end{proof}
%===================================================================================%

Now we ready to extend the main theorem \cite{M-R} in some
directions by using some matrix tricks.

\begin{theorem}\label{the2}
Let $ 12 \le \eta$ be a positive integer and let $\Phi: \mathcal{M}_m
\to \mathcal{M}_n$ be a unital $\eta$-positive linear map. Then
\begin{align}\label{nice}
 |||\Phi(AB)-\Phi(A)\Phi(B)||| \leq \frac{1}{4}|||I_{n}|||~|||I_{m^2n}|||\,d_A\,d_B
\end{align}
for all $A, B \in \mathcal{M}_m$.
\end{theorem}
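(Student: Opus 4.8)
The plan is to bypass the Stinespring dilation — which drove the proof of Theorem~\ref{main1} but is unavailable here, since $\Phi$ need not be completely positive — by producing the decisive positive $2\times 2$ operator block matrix directly through a matricial trick, and then running the norm estimate of Theorem~\ref{main1}(ii) essentially verbatim.

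First note that $\Phi$, being $\eta$-positive with $\eta\ge 12$, is in particular $3$-positive (as $\eta$-positivity entails $k$-positivity for every $k\le\eta$), so $\Phi_3:\mathcal{M}_3(\mathcal{M}_m)\to\mathcal{M}_3(\mathcal{M}_n)$ is positive. Apply $\Phi_3$ to the manifestly positive rank-one block matrix
\[
M=\begin{pmatrix} I_m\\ A^*\\ B^*\end{pmatrix}\begin{pmatrix} I_m& A& B\end{pmatrix}
=\begin{pmatrix} I_m& A& B\\ A^*& A^*A& A^*B\\ B^*& B^*A& B^*B\end{pmatrix}\in\mathcal{M}_3(\mathcal{M}_m)\,.
\]
Since $\Phi(I_m)=I_n$ and $\Phi(X^*)=\Phi(X)^*$, the positive matrix $\Phi_3(M)$ has the block form $\begin{pmatrix} I_n& R\\ R^*& S\end{pmatrix}$ with $R=\begin{pmatrix}\Phi(A)& \Phi(B)\end{pmatrix}$ and $S=\begin{pmatrix}\Phi(A^*A)& \Phi(A^*B)\\ \Phi(B^*A)& \Phi(B^*B)\end{pmatrix}$. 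As the $(1,1)$-corner $I_n$ is invertible, Lemma~\ref{lemrb} (applied after interchanging the two blocks) gives $S\ge R^*R$, that is,
\[
\begin{pmatrix}\Phi(A^*A)-\Phi(A^*)\Phi(A)& \Phi(A^*B)-\Phi(A^*)\Phi(B)\\ \Phi(B^*A)-\Phi(B^*)\Phi(A)& \Phi(B^*B)-\Phi(B^*)\Phi(B)\end{pmatrix}\ge 0\,,
\]
which is precisely the $\eta$-positive analogue of \eqref{eq1}, now obtained without Stinespring.

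With this in hand the argument follows that of Theorem~\ref{main1}(ii). Positivity of the $(1,1)$-entry gives $\Phi(A^*A)\ge\Phi(A^*)\Phi(A)$, while for every $\lambda\in\mathbb{C}$ the identity
\[
\Phi(A^*A)-\Phi(A^*)\Phi(A)=\Phi\big((A-\lambda I)^*(A-\lambda I)\big)-\big(\Phi(A)-\lambda I\big)^*\big(\Phi(A)-\lambda I\big)
\]
combined with positivity and unitality of $\Phi$ yields $0\le\Phi(A^*A)-\Phi(A^*)\Phi(A)\le\|A-\lambda I\|^2 I_n$, so $\|\Phi(A^*A)-\Phi(A^*)\Phi(A)\|\le\inf_{\lambda\in\mathbb{C}}\|A-\lambda I\|^2=\frac14 d_A^2$, and likewise for $B$. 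Writing $C=\Phi(A^*A)-\Phi(A^*)\Phi(A)\ge0$ and $D=\Phi(B^*B)-\Phi(B^*)\Phi(B)\ge0$, Lemma~\ref{cont} represents the off-diagonal block above as $C^{1/2}KD^{1/2}$ for a contraction $K\in\mathcal{M}_n$, and then Lemmas~\ref{lem3.1}(ii) and~\ref{q} give
\[
|||\Phi(A^*B)-\Phi(A^*)\Phi(B)|||\le\|C\|^{1/2}\,|||K|||\,\|D\|^{1/2}\le|||I_n|||\,\|C\|^{1/2}\|D\|^{1/2}\le\frac14|||I_n|||\,d_A d_B\,.
\]
Replacing $A^*$ by $A$ and using $d_{A^*}=d_A$ gives $|||\Phi(AB)-\Phi(A)\Phi(B)|||\le\frac14|||I_n|||\,d_A d_B$, which implies \eqref{nice} (and in fact sharpens it, since $|||I_{m^2 n}|||\ge1$).

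The main obstacle is this first step: for a merely $\eta$-positive, non-completely-positive map the Stinespring route to \eqref{eq1} is closed, so everything rests on the rank-one block matrix $M$ together with the Schur-complement lemma. This is the single place where the present argument genuinely departs from the proof of Theorem~\ref{main1}, and it is where the positivity hypothesis on $\Phi$ is consumed — in fact $3$-positivity already suffices for this route.
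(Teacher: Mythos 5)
Your proof is correct, and it takes a genuinely different (and in fact stronger) route than the paper's. The opening move is the same as the paper's: apply $\Phi_3$ to the positive $3\times 3$ block matrix built from $(I_m,A,B)$ and use the Schur--complement Lemma~\ref{lemrb} to obtain the positive $2\times 2$ matrix \eqref{eq222}; you correctly observe that this step never needs $A,B$ to be Hermitian, only $3$-positivity. The divergence is in how the diagonal blocks are controlled. The paper bounds $\Phi(A^*A)-\Phi(A^*)\Phi(A)$ by invoking Theorem~\ref{main1}(i) on the restriction of $\Phi$ to the commutative algebra $\mathscr{C}^*(A,I_m)$ (where it is completely positive, hence Stinespring applies), which forces $A,B$ to be Hermitian and brings in the factor $|||I_{m^2n}|||$; arbitrary $A,B$ are then recovered by two rounds of the off-diagonal $2\times 2$ dilation trick applied to $\Phi_2$, which is exactly what inflates the hypothesis from $3$- to $6$- to $12$-positivity. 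You instead bound the diagonal blocks only in the operator norm, via the translation identity together with positivity and unitality of $\Phi$, getting $\|C\|\le\frac14 d_A^2$ directly --- no Stinespring, no Hermitian reduction, no dilation doubling. Since Lemma~\ref{lem3.1}(ii) only ever uses the operator norms of the outer factors $C^{1/2},D^{1/2}$, this is all that is needed, and you obtain $\frac14|||I_n|||\,d_A d_B$ already for unital $3$-positive maps, which is sharper than \eqref{nice} and settles part of the range $3\le\eta<12$ left open in the remark following the theorem; it is essentially the Moslehian--Raji\'c operator-norm argument upgraded to unitarily invariant norms via Lemmas~\ref{cont}, \ref{q} and \ref{lem3.1}(ii). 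The one point you should make explicit is the final passage from your bound to \eqref{nice}: the inequality $|||I_{m^2n}|||\ge 1$ requires the normalization $g(1,0,0,\ldots)=1$ of the symmetric norming function, which is part of the Gohberg--Krein definition the paper cites (and is implicitly used in the paper's own proof of Theorem~\ref{main1}(ii) when operator norms are absorbed into $|||I_{kn}|||$), so this is consistent with the paper's framework but deserves a sentence.
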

\begin{proof}

First assume that $A, B$ are Hermitian matrices. Employing the
$3$-positivity of linear map $\Phi$ to the $3\times 3$ block matrix
$$
\begin{bmatrix}
A^*A&A^*B&A^*\\
B^*A&B^*B&B^*\\
A&B&I_m
\end{bmatrix}=\begin{bmatrix}
A& B& I_m
\end{bmatrix}^*
\begin{bmatrix}
A& B& I_m
\end{bmatrix} \geq 0$$
and applying Lemma \ref{lemrb} with $X^*=\Big(\Phi(A)~~\Phi(B) \Big)$, $C=\left(
\begin{array}{cc}
\Phi(A^*A) & \Phi(A^*B) \\
\Phi(B^*A) & \Phi(B^*B)%
\end{array}%
\right)$ and $D=I_m$, we obtain
\begin{equation}
\left(
\begin{array}{cc}
\Phi(A^*A)-\Phi(A)^*\Phi(A) & \Phi(A^*B)-\Phi(A)^{*
}\Phi(B) \\
\Phi(B^*A)-\Phi(B)^*\Phi(A) & \Phi(B^*B)-\Phi(B)^{*
}\Phi(B)%
\end{array}%
\right) \geq 0. \label{eq222}
\end{equation}
As $A$ is Hermitian, the unital $C^*$-algebra $\mathscr{C}^*(A,I_m)$
generated by $A$ and the identity $I_m$ is commutative. Hence the
restriction of $\Phi$ to $\mathscr{C}^*(A,I_m)$ is a unital
completely positive map. Thus Theorem \ref{main1} (i) gives us the
inequality
$$|||\Phi(A^*A)-\Phi(A^*)\Phi(A)|||^{\frac{1}{2}} \leq
~\sqrt{|||I_{m^2n}|||}~\|A\|\,.$$ A similar formula is valid for
$B$ instead of $A$. Now the same reasoning as in the proof of
Theorem \ref{main1} (ii) along with \eqref{eq222} shows that the
inequality
\begin{equation}\label{zzz}
|||\Phi(AB)-\Phi(A)\Phi(B)||| \leq
|||I_{n}|||~|||I_{m^2n}|||~\|A\|~\|B\|
\end{equation}
holds for any Hermitian matrices $A, B$ and any $3$-positive map
$\Phi$.

Second let $A$ and $B$ be arbitrary and Hermitian matrices,
respectively. Applying inequality \eqref{zzz} to $3$-positive map
$\Phi_2: \mathcal{M}_2(\mathcal{M}_m) \to
\mathcal{M}_2(\mathcal{M}_n)$ and Hermitian matrices $\begin{bmatrix} 0& A\\
A^*& 0\end{bmatrix}$ and $\begin{bmatrix} 0&0\\
0& B\end{bmatrix}$ and using Lemma \ref{ppp} we get
\begin{align*}
&\left|\left|\left|\Phi_2\left(\begin{bmatrix} 0& A\\
A^*& 0\end{bmatrix}\begin{bmatrix} 0&0\\
0& B\end{bmatrix}\right)-\Phi_2\left(\begin{bmatrix} 0& A\\ A^*&
0\end{bmatrix}\right)\Phi_2\left(\begin{bmatrix} 0&0\\
0& B\end{bmatrix}\right)\right|\right|\right| \\
& \qquad \qquad\qquad\qquad\qquad\qquad\qquad\qquad\qquad\qquad\qquad\leq
|||I_{n}|||~|||I_{m^2n}|||~\|A\|\,\|B\|
\end{align*}
Since
$$\Phi_2\left(\begin{bmatrix} 0& A\\
A^*& 0\end{bmatrix}\begin{bmatrix} 0&0\\
0& B\end{bmatrix}\right)=\begin{bmatrix} 0& \Phi(AB)\\
0&0\end{bmatrix}$$ and $$\Phi_2\left(\begin{bmatrix} 0& A\\
A^*&
0\end{bmatrix}\right)\Phi_2\left(\begin{bmatrix} 0&0\\
0& B\end{bmatrix}\right)=\begin{bmatrix} 0& \Phi(A)\Phi(B)\\
0&0\end{bmatrix}$$ we have
$$\left|\left|\left|\begin{bmatrix} 0& \Phi(AB)-\Phi(A)\Phi(B)\\
0&0\end{bmatrix}\right|\right|\right|\leq
|||I_{n}|||~|||I_{m^2n}|||~\|A\|\,\|B\|\,.$$ Hence
\begin{align*}
&\hspace{-1cm}\left|\left|\left|\begin{bmatrix} \Phi(AB)-\Phi(A)\Phi(B)&0 \\
0&0\end{bmatrix}\right|\right|\right|\\
&=\left|\left|\left|\begin{bmatrix}I_n& 0\\
0&I_n\end{bmatrix}\begin{bmatrix} 0& \Phi(AB)-\Phi(A)\Phi(B)\\
0&0\end{bmatrix}\begin{bmatrix}0& I_n\\
I_n&0\end{bmatrix}\right|\right|\right|\\
&=\left|\left|\left|\begin{bmatrix} 0& \Phi(AB)-\Phi(A)\Phi(B)\\
0&0\end{bmatrix}\right|\right|\right|\\
&\leq|||I_{n}|||~|||I_{m^2n}|||~\|A\|~\|B\|
\end{align*}
for arbitrary matrix $A$, Hermitian matrix $B$ and $6$-positive map
$\Phi$.

Third, by repeating the same argument as above to the latter
inequality for arbitrary matrix $B$ we conclude that
\begin{align*}
&\hspace{-1cm}\left|\left|\left|\begin{bmatrix} \Phi(AB)-\Phi(A)\Phi(B)&0 \\
0&0\end{bmatrix}\right|\right|\right| \leq|||I_{n}|||~|||I_{m^2n}|||~\|A\|~\|B\|
\end{align*}
or, in our notation \eqref{notation},
$$|||\Phi(AB)-\Phi(A)\Phi(B) |||
\leq |||I_{n}|||\,|||I_{m^2n}|||\|A\|\,\|B\|$$ for any arbitrary
matrices $A, B$ and $12$-positive map $\Phi$. It follows from the
latter inequality that
\begin{align*} |||\Phi(AB)-&\Phi(A)\Phi(B)|||\\
&=|||\Phi\big((A-\lambda I_m)(B-\mu I_m)\big)-\Phi(A-\lambda
I_m)\Phi(B
-\mu I_m)|||\\
&\leq |||I_{n}|||~|||I_{m^2n}|||~\|A-\lambda I_m\|~\|B-\lambda
I_m\|\,. \end{align*} for all $\lambda,\mu\in \mathbb C.$ Thus
\begin{align*} ||| \Phi(AB)-\Phi(A)\Phi(B)|||&\leq |||I_{n}|||~|||I_{m^2n}|||\inf_{\lambda\in \mathbb
C}\|A-\lambda I_m\|\,\inf_{\mu\in\mathbb C}\|B-\mu I_m\|\\&=
\frac{1}{4}|||I_{n}|||~|||I_{m^2n}|||d_A\,d_B\,.
\end{align*}
\end{proof}
%%%%%%%%%%%%%%%%%%%%%%%%%%%%%%%%%%%%%%%%%%%%%%%%%%%%%%%%%%%%%%%%%%%%%%%%%%%%%%%%%%%
%%%%%%%%%%%%%%%%%%%%%%%%%%%%%%%%%%%%%%%%%%%%%%%%%%%%%%%%%%%%%%%%%%%%%%%%%%%%%%%%%%%

%===================================================================================%
\begin{remark}
It is remarked that Theorem \ref{the2} is not true if $\Phi$ is
supposed to be unital $2$-positive linear map. To see this choose
map $\Phi:\mathcal{M}_3 \to \mathcal{M}_3$ defined as
$\Phi(A)=2\rm{tr}(A)I_3 -A$.
Then $\Phi$ is 2-positive but not 3-positive (see \cite{CHO}). Taking $A=\begin{pmatrix} 1 & 0& 0\\ 0 & 0 & 1\\
 0 & 1 & 0 \end{pmatrix}$ and $B =\begin{pmatrix}
 0 & 0 & 1\\
 0 & 0 & 0\\
 1 & 0 & 1 \end{pmatrix}$ one can easily observe that $2 \times 2$ block matrix in
(\ref{eq222}) is not positive and \eqref{nice} does not hold for the
operator norm. The case when $2<\eta<12$ remains unsolved.
\end{remark}
%-------------------------------------

\section{Gr\"uss inequality for the case of arbitrary dimension}

A variant of the following lemma can be found in \cite[Lemma
4.1]{MNS}. We, however, prove it for the sake of
completeness. Recall that the ball of diameter $[x,y]$ in a normed
space $E$ is the set of all elements $z \in E$ such that
$\|z-(x+y)/2\| \leq \| (x-y)/2 \|$.

\begin{lemma}\label{lem1}
Let $\mathscr{A}$ be a unital $C^*$-algebra, $\Phi:\mathscr{A} \to
\mathbb{B}(\mathscr{H})$ be a unital completely positive map, $A \in
\mathscr{A}$ belongs to the ball of diameter $[mI, MI]$ for some
complex numbers $m, M$. Then
\begin{eqnarray*}
\Phi(|A|^2) - \left|\Phi(A)\right|^2 \leq \frac{1}{4}|M-m|^2 I.
\end{eqnarray*}
\end{lemma}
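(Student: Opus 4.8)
The plan is to use the Stinespring dilation theorem to write $\Phi(T) = V^*\pi(T)V$ with $V$ an isometry and $\pi$ a unital $*$-homomorphism, and then reduce the inequality to a scalar estimate coming from the hypothesis that $A$ lies in the ball of diameter $[mI, MI]$. First I would recall the Kadison–Schwarz computation already carried out in the proof of Theorem \ref{main1}(i): for every $\lambda \in \mathbb{C}$,
\begin{align*}
\Phi(|A|^2) - |\Phi(A)|^2 &= V^*\pi(A^*A)V - V^*\pi(A^*)VV^*\pi(A)V\\
&= V^*\pi(A-\lambda I)^*\big(I - VV^*\big)\pi(A-\lambda I)V,
\end{align*}
where the middle equality uses that $\pi$ is a $*$-homomorphism and $V^*V = I$. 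Since $I - VV^*$ is a projection, it is in particular a positive contraction, so
\[
0 \leq V^*\pi(A-\lambda I)^*\big(I - VV^*\big)\pi(A-\lambda I)V \leq V^*\pi(A-\lambda I)^*\pi(A-\lambda I)V = \Phi(|A-\lambda I|^2).
\]

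Next I would bound the right-hand side in operator order. Because $\pi$ is a unital $*$-homomorphism it is completely positive and contractive, so $\Phi$ is unital completely positive and the Kadison–Schwarz inequality gives $\Phi(|A-\lambda I|^2) \leq \|A-\lambda I\|^2\, I$ — indeed $|A - \lambda I|^2 \leq \|A-\lambda I\|^2 I$ in $\mathscr{A}$, and applying the positive unital map $\Phi$ preserves this. Combining, for every $\lambda \in \mathbb{C}$,
\[
\Phi(|A|^2) - |\Phi(A)|^2 \leq \|A - \lambda I\|^2\, I.
\]
Now I choose $\lambda = (m+M)/2$. The hypothesis that $A$ belongs to the ball of diameter $[mI, MI]$ says precisely that $\big\|A - \tfrac{m+M}{2}I\big\| \leq \big\|\tfrac{M-m}{2}I\big\| = \tfrac{1}{2}|M-m|$, so with this choice $\|A-\lambda I\|^2 \leq \tfrac{1}{4}|M-m|^2$ and the claimed inequality
\[
\Phi(|A|^2) - |\Phi(A)|^2 \leq \frac{1}{4}|M-m|^2\, I
\]
follows at once.

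The only genuinely substantive point is the operator-order step $\Phi(|A-\lambda I|^2) \leq \|A-\lambda I\|^2 I$ together with the identity $\Phi(|A|^2)-|\Phi(A)|^2 \leq \Phi(|A-\lambda I|^2)$; both are already implicit in the proof of Theorem \ref{main1}, the first by completely-positive (hence positive) functoriality and the second by the projection bound $I - VV^* \leq I$ sandwiched by $\pi(A-\lambda I)$ and its adjoint. Everything else — choosing the optimal $\lambda$ at the midpoint and reading off the radius from the ball hypothesis — is routine. I expect no real obstacle; the main care is simply to phrase the $VV^*$-projection manipulation in the general (possibly infinite-dimensional) Hilbert space setting, where one works with operator inequalities rather than with singular values as in the finite-dimensional Theorem \ref{main1}.
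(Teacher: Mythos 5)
Your proof is correct, but it takes a heavier route than the paper. The paper's own argument never invokes the Stinespring dilation: it simply observes the translation identity $\Phi(|A|^2)-|\Phi(A)|^2=\Phi(|A-c|^2)-|\Phi(A-c)|^2$ for every $c\in\mathbb{C}$, drops the positive term $|\Phi(A-c)|^2\geq 0$, and then applies the positive unital map $\Phi$ to the operator inequality $\left|A-\tfrac{M+m}{2}I\right|^2\leq\tfrac{1}{4}|M-m|^2I$, which is exactly the ball hypothesis. You instead dilate $\Phi(T)=V^*\pi(T)V$ and use the projection bound $0\leq I-VV^*\leq I$ to reach the same intermediate inequality $\Phi(|A|^2)-|\Phi(A)|^2\leq\Phi(|A-\lambda I|^2)$; all your steps (the translated Stinespring identity, the sandwich by $\pi(A-\lambda I)$ and its adjoint, the bound $|A-\lambda I|^2\leq\|A-\lambda I\|^2I$, and the midpoint choice $\lambda=(m+M)/2$) are valid, so there is no gap. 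The comparison is worth noting: your dilation step is dispensable, since the translation identity alone already gives $\Phi(|A|^2)-|\Phi(A)|^2\leq\Phi(|A-\lambda I|^2)$ by discarding $|\Phi(A-\lambda I)|^2\geq 0$; as a consequence the paper's argument uses only positivity (plus unitality and $*$-preservation) of $\Phi$, so the lemma actually holds for merely positive unital maps, whereas your route genuinely needs complete positivity to invoke Stinespring. What your approach buys is a structural formula, $\Phi(|A|^2)-|\Phi(A)|^2=V^*\pi(A-\lambda I)^*(I-VV^*)\pi(A-\lambda I)V$, which makes the positivity of the defect transparent and connects this lemma to the computations in Theorem \ref{main1}; also, your appeal to ``Kadison--Schwarz'' for the step $\Phi(|A-\lambda I|^2)\leq\|A-\lambda I\|^2I$ is a misnomer (it is just monotonicity of a positive unital map), though the argument you actually write there is correct.
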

\begin{proof}
For any complex number $c\in {\mathbb C}$, we have
\begin{equation} \label{eq:Gr-1}
\Phi(|A|^2)-|\Phi(A)|^2 = \Phi(|A-c|^2)-|\Phi(A-c)|^2.
\end{equation}
The assumption of lemma implies that
\[
\left|A-\frac{M+m}{2}I\right|^2 \leq \frac{1}{4}|M-m|^2I
\]
whence
\begin{equation} \label{eq:Gr-2}
\Phi\left(\left|A-\frac{M+m}{2}I\right|^2\right) \leq
\frac{1}{4}|M-m|^2I\,.
\end{equation}
It follows from \eqref{eq:Gr-1} and \eqref{eq:Gr-2} that
\begin{eqnarray*}
\Phi(|A|^2)-|\Phi(A)|^2 \leq
\Phi\left(\left|A-\frac{M+m}{2}I\right|^2\right) \leq
\frac{1}{4}|M-m|^2I.
\end{eqnarray*}
\end{proof}
%------------------------------------------------------------------------------------%
\begin{remark}
The geometric property that $A \in \mathscr{A}$ belongs to the ball
of diameter $[mI, MI]$ in Lemma \ref{lem1} is equivalent to the fact
that
\begin{eqnarray*}
{\rm Re}\left((MI-A)^*(A-mI)\right)\geq 0
\end{eqnarray*}
since
\[
\frac{1}{4}|M-m|^2I-\left|A-\frac{M+m}{2}\right|^2=\mbox{Re}\left((MI-A)^*(A-mI)\right)\,.
\]
\end{remark}
%------------------------------------------------------------------------------------%
We are ready to state our second main result. It should be notified that it is an operator inequality of Gr\"uss type while inequality (ii) in Theorem \ref{main1} provides a Gr\"uss norm inequality.

\begin{theorem}\label{main2}
Let $\mathscr{A}$ be a unital $C^*$-algebra and $\Phi:\mathscr{A}
\to \mathbb{B}(\mathscr{H})$ be a unital completely positive map. If
elements $A$ and $B$ of $\mathscr{A}$ belong to the balls of
diameter $[m_1I, M_1I]$ and $[m_2I, M_2I]$ for some complex numbers
$m_1, M_1, m_2,M_2$, respectively, then
\begin{eqnarray*}
\left| \Phi(AB)-\Phi(A)\Phi(B)\right| \leq \frac{1}{4} |M_1-m_1| \,
|M_2-m_2|I.
\end{eqnarray*}
\end{theorem}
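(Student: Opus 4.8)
The plan is to run the same Stinespring argument used for Theorem~\ref{main1}(i), but to keep everything at the level of operators instead of passing to singular values, and then to observe that an operator inequality of the form $|T|\le cI$ is nothing but the norm bound $\|T\|\le c$. Since $\Phi$ is unital and completely positive, the Stinespring dilation theorem gives a Hilbert space $\mathscr{K}$, an isometry $V:\mathscr{H}\to\mathscr{K}$ and a unital $*$-homomorphism $\pi:\mathscr{A}\to\mathbb{B}(\mathscr{K})$ with $\Phi(T)=V^*\pi(T)V$ for all $T\in\mathscr{A}$. The first step is the shift-invariance identity
\begin{align*}
\Phi(AB)-\Phi(A)\Phi(B)=\Phi\big((A-c_1I)(B-c_2I)\big)-\Phi(A-c_1I)\Phi(B-c_2I),
\end{align*}
valid for all $c_1,c_2\in\mathbb{C}$, which follows by a one-line expansion using linearity and unitality of $\Phi$.

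Next I specialise to $c_1=(M_1+m_1)/2$, $c_2=(M_2+m_2)/2$ and set $A_0=A-c_1I$, $B_0=B-c_2I$. Using $\pi(A_0B_0)=\pi(A_0)\pi(B_0)$ I obtain
\begin{align*}
\Phi(AB)-\Phi(A)\Phi(B)=V^*\pi(A_0)\big(I_{\mathscr{K}}-VV^*\big)\pi(B_0)V.
\end{align*}
Since $V$ is an isometry, $P:=I_{\mathscr{K}}-VV^*$ is an orthogonal projection, so $P=P^*=P^2$, and the right-hand side factors as $R^*S$ with $R=P\,\pi(A_0)^*V$ and $S=P\,\pi(B_0)V$, bounded operators from $\mathscr{H}$ into $\mathscr{K}$. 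Because a projection and an isometry are contractions and a unital $*$-homomorphism is norm-decreasing, $\|R\|\le\|\pi(A_0)^*\|\le\|A_0\|$ and $\|S\|\le\|B_0\|$, whence
\begin{align*}
\big\|\Phi(AB)-\Phi(A)\Phi(B)\big\|=\|R^*S\|\le\|R\|\,\|S\|\le\|A_0\|\,\|B_0\|.
\end{align*}
The hypothesis that $A$ lies in the ball of diameter $[m_1I,M_1I]$ says exactly that $\|A_0\|=\|A-\tfrac{M_1+m_1}{2}I\|\le\tfrac12|M_1-m_1|$, and similarly $\|B_0\|\le\tfrac12|M_2-m_2|$, so $\|\Phi(AB)-\Phi(A)\Phi(B)\|\le\tfrac14|M_1-m_1|\,|M_2-m_2|$. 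Finally, for every $X\in\mathbb{B}(\mathscr{H})$ one has $X^*X\le\|X\|^2I$, hence $|X|=(X^*X)^{1/2}\le\|X\|\,I$ by operator monotonicity of the square root; applying this with $X=\Phi(AB)-\Phi(A)\Phi(B)$ yields the asserted operator inequality.

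I do not expect a genuine obstacle here; the only point worth flagging is why the finite-dimensional route of Theorem~\ref{main1}(ii) cannot simply be copied, namely that Lemma~\ref{cont} (factoring the corner of a positive $2\times2$ block through a contraction) is stated only for $\mathcal{M}_n$. One therefore argues directly as above, or, alternatively, establishes by the same Stinespring computation that the $2\times2$ operator block with diagonal entries $\Phi(|A^*|^2)-|\Phi(A^*)|^2$ and $\Phi(|B|^2)-|\Phi(B)|^2$ and off-diagonal entry $\Phi(AB)-\Phi(A)\Phi(B)$ is positive, bounds the diagonal entries by $\tfrac14|M_1-m_1|^2I$ and $\tfrac14|M_2-m_2|^2I$ via Lemma~\ref{lem1} (applied to $A^*$, which lies in the ball of diameter $[\overline{m_1}I,\overline{M_1}I]$, and to $B$), and then feeds the perturbed block $\left(\begin{smallmatrix}C&X\\ X^*&D+\varepsilon I\end{smallmatrix}\right)$ into Lemma~\ref{lemrb}, letting $\varepsilon\downarrow0$, to get $XX^*\le\tfrac1{16}|M_1-m_1|^2|M_2-m_2|^2I$ and hence $|X|\le\tfrac14|M_1-m_1|\,|M_2-m_2|I$.
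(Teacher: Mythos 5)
Your proposal is correct, but it follows a genuinely different route from the paper's. The paper proves the theorem by establishing positivity of the $2\times 2$ operator block \eqref{eq2}, applying the Schur--complement criterion of Lemma \ref{lemrb} to it, and bounding both diagonal entries via Lemma \ref{lem1}, which yields the intermediate inequality \eqref{nep} and hence the claim; you instead shift by $c_1=\tfrac{M_1+m_1}{2}$, $c_2=\tfrac{M_2+m_2}{2}$, write $\Phi(AB)-\Phi(A)\Phi(B)=V^*\pi(A_0)\,(I_{\mathscr{K}}-VV^*)\,\pi(B_0)V$, factor through the projection $I_{\mathscr{K}}-VV^*$, and finish with submultiplicativity of the operator norm plus the elementary equivalence $|T|\le cI\Longleftrightarrow\|T\|\le c$, your reading of the ball hypothesis as $\|A_0\|\le\tfrac12|M_1-m_1|$ being exactly the paper's definition. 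Your route is shorter, dispenses with the block-matrix and Schur-complement machinery, and notably sidesteps the invertibility hypothesis in Lemma \ref{lemrb}, which the paper applies to $V^*\pi(|B|^2)V-|V^*\pi(B)V|^2$ without checking that this operator is invertible; the $\varepsilon$-regularized variant you sketch at the end is precisely the repair needed to make that step of the paper rigorous. What the paper's route retains is the dilated-level inequality \eqref{nep} stated in the form that is reused verbatim for the Hadamard-product corollary, though since $\Phi=V^*\pi(\cdot)V$ your norm estimate delivers the same inequality, so nothing essential is lost.
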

\begin{proof}
Let us use the notation in Theorem \ref{main1}. Using the positivity
of block matrix (\ref{eq2}) and Lemma \ref{lemrb}, we get
\begin{align*}
\frac{1}{4} |M_1&-m_1|^2I \\
&\geq V^*\pi(|A|^2)V - \big |V^*\pi(A)V \big|^2 \nonumber\\
 & \geq \big( V^*\pi(A^*B)V-V^*\pi(A)^*VV^*\pi(B)V \big) \left(V^*\pi(|B|^2)V- \big |V^*\pi(B)V \big|^2\right)^{-1}\nonumber\\
&\qquad\qquad\qquad\qquad\qquad\qquad \times \big( V^*\pi(A^*B)V-V^*\pi(A)^*VV^*\pi(B)V\big)^*\nonumber\\
&\geq \frac{4}{|M_2-m_2|^2} \left|
V^*\pi(A)^*BV-V^*\pi(A)^*VV^*\pi(B)V \right|^2,
\end{align*}
where the first and the third inequalities follow from Lemma
(\ref{lem1}) by taking the positive linear map $\Phi(X)=V^*\pi(X)V$,
where $V$ is the isometry in the Stinespring theorem. Hence
\begin{eqnarray}\label{nep}
\big| V^*\pi(A^*B)V-V^*\pi(A^*)VV^*\pi(B)V\big| \leq \frac{1}{4} \,
|M_1-m_1|\, |M_2-m_2|I.
\end{eqnarray}
Now the use of the Stinespring theorem yields
\begin{eqnarray*}
\left| \Phi(A^*B)-\Phi(A)^*\Phi(B)\right| \leq \frac{1}{4} |M_1-m_1|
\, |M_2-m_2|I.
\end{eqnarray*}
Replacing $A^*$ by $A$ in the latter inequality gives us the desired
inequality.
\end{proof}
%----------------------------------------------------------------------------------

Let $X \otimes Y$ and $X \circ Y$ denote the tensor product and the
Hadamard product of matrices $X$ and $Y$, respectively. Taking
$A=A_1 \otimes A_2$, $B=B_1 \otimes B_2$, $*$-homomorphism
$\pi(X)=X$ and isometry $V$ as a selective operator with property
$V^*(X \otimes Y)V=X \circ Y$ in (\ref{nep}) we get the following
corollary as a Hadamard product version of Gr\"uss inequality.

\begin{corollary}
 Let $A_1,A_2,B_1,B_2 \in \mathcal{M}_n$ such that matrices
$A_1 \otimes A_2$ and $B_1 \otimes B_2$ belong to the balls of
diameter $[m_1I_{n^2}, M_1I_{n^2}]$ and $[m_2I_{n^2}, M_2I_{n^2}]$
for some complex numbers $m_1, M_1, m_2,M_2$, respectively. Then
\begin{eqnarray*}
\left| (A_1B_1)\circ (A_2B_2)-(A_1\circ A_2)(B_1\circ B_2)\right|
\leq \frac{1}{4} |M_1-m_1| \, |M_2-m_2|I.
\end{eqnarray*}
\end{corollary}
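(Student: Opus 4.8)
The plan is to realize the Hadamard product as a compression of the tensor product and then simply specialize Theorem \ref{main2}. First I would fix the standard orthonormal basis $\{e_1,\dots,e_n\}$ of $\mathbb{C}^n$ and introduce the \emph{selection isometry} $V:\mathbb{C}^n\to\mathbb{C}^n\otimes\mathbb{C}^n$ determined by $Ve_i=e_i\otimes e_i$. Since the vectors $e_i\otimes e_i$ form an orthonormal set, $V^*V=I_n$, so $V$ is indeed an isometry, and comparing matrix entries gives $\langle V^*(X\otimes Y)Ve_i,e_j\rangle=\langle Xe_i,e_j\rangle\langle Ye_i,e_j\rangle$, that is, $V^*(X\otimes Y)V=X\circ Y$ for all $X,Y\in\mathcal{M}_n$. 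Hence the map $\Phi:\mathcal{M}_{n^2}\to\mathcal{M}_n$ given by $\Phi(Z)=V^*ZV$ is a unital completely positive map (being a compression of the identity $*$-homomorphism $\pi=\mathrm{id}$ on $\mathcal{M}_{n^2}$, with $V^*V=I_n$ giving unitality), and $\Phi=V^*\pi(\cdot)V$ is already a Stinespring representation of $\Phi$, so Theorem \ref{main2} is available for this $\Phi$.

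Next I would set $A=A_1\otimes A_2$ and $B=B_1\otimes B_2$ in $\mathcal{M}_{n^2}$. By hypothesis these lie in the balls of diameter $[m_1I_{n^2},M_1I_{n^2}]$ and $[m_2I_{n^2},M_2I_{n^2}]$, so Theorem \ref{main2} yields
\begin{eqnarray*}
\bigl|\,\Phi(AB)-\Phi(A)\Phi(B)\,\bigr|\leq\frac{1}{4}\,|M_1-m_1|\,|M_2-m_2|\,I_n .
\end{eqnarray*}
It then remains only to identify the three operators on the left. By the mixed-product rule $AB=(A_1B_1)\otimes(A_2B_2)$, so $\Phi(AB)=V^*\bigl((A_1B_1)\otimes(A_2B_2)\bigr)V=(A_1B_1)\circ(A_2B_2)$; likewise $\Phi(A)=A_1\circ A_2$ and $\Phi(B)=B_1\circ B_2$, hence $\Phi(A)\Phi(B)=(A_1\circ A_2)(B_1\circ B_2)$. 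Substituting these identities into the displayed inequality gives precisely the asserted estimate. (Alternatively one may plug $A^{*}=A_1\otimes A_2$, $B=B_1\otimes B_2$, $\pi=\mathrm{id}$ and this $V$ straight into inequality \eqref{nep}.)

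I do not expect a genuine obstacle: the corollary is a direct instance of Theorem \ref{main2}. The only delicate points are the one-line entrywise check that $V^*(X\otimes Y)V=X\circ Y$ and a little care with adjoints: since the ball-of-diameter hypotheses are imposed on the tensor products $A_1\otimes A_2$ and $B_1\otimes B_2$ themselves, one must match \emph{those} operators (not their adjoints) with the data fed into Theorem \ref{main2}.
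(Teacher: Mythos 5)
Your proposal is correct and follows essentially the same route as the paper: the paper obtains the corollary by taking $A=A_1\otimes A_2$, $B=B_1\otimes B_2$, $\pi=\mathrm{id}$ and the selection isometry $V$ with $V^*(X\otimes Y)V=X\circ Y$ in inequality \eqref{nep}, which is exactly your construction (you invoke Theorem \ref{main2} itself, and also note the direct substitution into \eqref{nep} as an alternative). Your explicit verification that $Ve_i=e_i\otimes e_i$ gives an isometry implementing the Hadamard product, and the use of the mixed-product rule, are just the details the paper leaves implicit.
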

%----------------------------------------------------------------------------------

Let ${\mathscr A}$ be a unital $C^*$-algebra and let $T$ be a
locally compact Hausdorff space. Let $C(T,\mathscr{A})$ be the set
of bounded continuous functions on $T$ with values in $\mathscr{A}$
as a normed involutive algebra by applying the point-wise operations
and settings. By a field $(A_t)_{t\in T}$ of operators in ${\mathscr
A}$ we mean a function of $T$ into $\mathscr{A}$. It is called a
continuous field if the function $t \mapsto A_t$ is norm continuous
on $T$. We assume that $\mu(t)$ is a Radon measure on $T$ with
$\mu(T)=1$. If the function $t \mapsto \|A_t\|$ is integrable, one
can form the Bochner integral $\int_{T}A_t{\rm d}\mu(t)$, which is
the unique element in ${\mathscr A}$ such that
$$\varphi\left(\int_TA_t{\rm d}\mu(t)\right)=\int_T\varphi(A_t){\rm d}\mu(t)$$
for every linear functional $\varphi$ in the norm dual ${\mathscr
A}^*$ of ${\mathscr A}$. It is easy to see that the set
$C(T,\mathscr{A})$ of all continuous fields of operators on $T$ with
values in ${\mathscr A}$ is a $C^*$-algebra under the pointwise
operations and the norm $\|(A_t)\|=\sup_{t\in T}\|A_t\|$; cf.
\cite{HPP}. Clearly $\mathscr{A}$ can be regarded as a
$C^*$-subalgebra of $C(T,\mathscr{A})$ via the constant fields. Then
the mapping $\Phi: C(T,\mathscr{A}) \to \mathscr{A}$ defined by
$\Phi\big((A_t)\big)=\int_TA_td\mu(t)$ satisfies the following
conditions:
\begin{itemize}
\item[(i)] $\Phi(X)=X$ for all $X \in \mathscr{A}$;
\item[(ii)] $\Phi(X\,(A_t)\, Y)=X\Phi((A_t))Y$ for all $X, Y \in
\mathscr{A}$ and all $(A_t)\in C(T,\mathscr{A})$;
\item[(iii)] If $(A_t) \geq 0$, then $\Phi((A_t)) \geq 0$.
\end{itemize}
Thus it is a conditional expectation and so it is completely
positive. Applying Theorem \ref{main2} we reach to the following
result.
%===================================================================================%

\begin{corollary} Let $M_1, m_1, M_2, m_2 \in {\Bbb
C}$ and fields $(A_t)$ and $(B_t)$ of $C(T,\mathscr{A})$ belong to
the balls of diameter $[m_1I, M_1I]$ and $[m_2I, M_2I]$,
respectively, where $I$ denotes the identity element of $
C(T,\mathscr{A})$. Then
\begin{eqnarray*}
\left| \int_TA_tB_td\mu(t)-\int_TA_td\mu(t)\int_TB_td\mu(t)\right|
\leq \frac{1}{4} |M_1-m_1| \, |M_2-m_2|I.
\end{eqnarray*}
\end{corollary}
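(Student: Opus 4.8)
The plan is to specialize Theorem~\ref{main2} to the particular completely positive map $\Phi: C(T,\mathscr{A}) \to \mathscr{A}$ defined by the Bochner integral $\Phi((A_t)) = \int_T A_t \, d\mu(t)$. The first thing I would verify is that this $\Phi$ is indeed unital and completely positive: unitality follows from property~(i) above (it sends the constant field $I$ to $I$), and complete positivity follows from the discussion in the text that $\Phi$ is a conditional expectation — more directly, one checks that $\Phi_m$ applied to a positive element of $\mathcal{M}_m(C(T,\mathscr{A})) = C(T, \mathcal{M}_m(\mathscr{A}))$ is again the entrywise Bochner integral of a positive continuous field, hence positive by property~(iii) applied in $\mathcal{M}_m(\mathscr{A})$ together with the fact that positivity of the integrand is preserved by integration against the probability measure $\mu$.

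Next I would identify the hypotheses of Theorem~\ref{main2} with those of the corollary. We take $\mathscr{A}$ there to be our fixed unital $C^*$-algebra, $\mathbb{B}(\mathscr{H})$ replaced by $\mathscr{A}$ itself (the target), and the role of "$A$" and "$B$" played by the continuous fields $(A_t), (B_t) \in C(T,\mathscr{A})$. The standing assumption is exactly that $(A_t)$ lies in the ball of diameter $[m_1 I, M_1 I]$ and $(B_t)$ lies in the ball of diameter $[m_2 I, M_2 I]$, where here $I$ is the identity of $C(T,\mathscr{A})$, i.e. the constant field equal to the identity of $\mathscr{A}$. With $\Phi$ as above, Theorem~\ref{main2} then yields
\begin{equation*}
\left| \Phi\big((A_t)(B_t)\big) - \Phi\big((A_t)\big)\,\Phi\big((B_t)\big) \right| \leq \frac{1}{4} |M_1 - m_1|\,|M_2 - m_2|\, I,
\end{equation*}
where the product $(A_t)(B_t)$ is the pointwise product, i.e. the field $(A_t B_t)_{t \in T}$. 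Unwinding the definition of $\Phi$, the left side becomes $\big| \int_T A_t B_t \, d\mu(t) - \int_T A_t \, d\mu(t) \int_T B_t \, d\mu(t) \big|$, which is precisely the claimed inequality.

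The only mild subtlety — which I expect to be the main (though still routine) obstacle — is bookkeeping around the identity element: the diameter condition for the fields is stated with respect to the identity $I$ of the big algebra $C(T,\mathscr{A})$, and one must make sure that $\Phi$ maps this $I$ to the identity of $\mathscr{A}$ and that the scalar-multiple-of-identity bound on the right of Theorem~\ref{main2} transports correctly under $\Phi$ (it does, since $\Phi(I) = I$ and $\Phi$ preserves the order). One should also note that $C(T,\mathscr{A})$ is genuinely a unital $C^*$-algebra (stated in the text) so that Theorem~\ref{main2} applies verbatim, and that integrability of $t \mapsto \|A_t\|$ and $t \mapsto \|B_t\|$ — needed to form the Bochner integrals — is automatic since the fields are bounded and continuous and $\mu$ is finite. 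No separate estimate is required; the corollary is a direct application.
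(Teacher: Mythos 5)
Your proposal is correct and matches the paper's own argument: the paper likewise notes that $\Phi\big((A_t)\big)=\int_T A_t\,d\mu(t)$ is a unital conditional expectation, hence completely positive, and then applies Theorem \ref{main2} directly. The extra checks you mention (entrywise positivity of $\Phi_m$, integrability, the identity bookkeeping) are routine and consistent with what the paper implicitly assumes.
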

%===================================================================================%
In the discrete case $T = \{1, \cdots, n\}$ we get
\begin{corollary} Let self-adjoint elements $A_1, \cdots, A_n, B_1, \cdots, B_n \in \mathscr{A}$ satisfy
\[
m_1\leq A_j \leq M_1,\qquad m_2\leq B_j \leq M_2\qquad(j=1, \cdots,
n)
\] for some real numbers $m_1, m_2, M_1, M_2$. If $C_1, \cdots, C_n \in \mathscr{A}$ are such that $\sum_{j=1}^nC_j^*C_j=I$, then
\begin{eqnarray*}
\left|\sum_{j=1}^nC_j^*A_jB_jC_j-\sum_{j=1}^nC_j^*A_jC_j\sum_{j=1}^nC_j^*B_jC_j\right|
\leq \frac{1}{4} (M_1-m_1) (M_2-m_2)I.
\end{eqnarray*}
\end{corollary}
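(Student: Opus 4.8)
The plan is to realize the discrete sum as an instance of Theorem \ref{main2} applied to a suitable completely positive map on a $C^*$-algebra built from finitely many copies of $\mathscr{A}$. Concretely, I would take the $C^*$-algebra $\mathscr{B}=\bigoplus_{j=1}^n\mathscr{A}$ (equivalently $C(T,\mathscr{A})$ for $T=\{1,\dots,n\}$ with the discrete topology) and define $\Phi:\mathscr{B}\to\mathscr{A}$ by $\Phi\big((X_j)_{j=1}^n\big)=\sum_{j=1}^nC_j^*X_jC_j$. The condition $\sum_{j=1}^nC_j^*C_j=I$ forces $\Phi(I)=I$, so $\Phi$ is unital. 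Positivity of $\Phi$ is immediate from $\sum_j C_j^*X_jC_j\geq 0$ whenever each $X_j\geq 0$, and complete positivity follows because $\Phi$ is a sum of maps of the form $X_j\mapsto C_j^*X_jC_j$, each of which is a compression and hence completely positive; alternatively one can invoke the Stinespring-type argument as in the $C(T,\mathscr{A})$ discussion already in the excerpt, since this $\Phi$ is again a conditional-expectation-type map.

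Next I would feed into Theorem \ref{main2} the elements $A=(A_j)_{j=1}^n$ and $B=(B_j)_{j=1}^n$ of $\mathscr{B}$. The hypothesis $m_1\leq A_j\leq M_1$ for all $j$ (with $m_1,M_1$ real) says exactly that each $A_j$ lies in the operator interval $[m_1I,M_1I]$, which in turn means $A_j$ lies in the ball of diameter $[m_1I,M_1I]$ in $\mathscr{A}$; taking the supremum over $j$ shows that $A=(A_j)$ lies in the ball of diameter $[m_1I,M_1I]$ in $\mathscr{B}$, and similarly $B=(B_j)$ lies in the ball of diameter $[m_2I,M_2I]$. Theorem \ref{main2} then yields
\begin{eqnarray*}
\left|\Phi(AB)-\Phi(A)\Phi(B)\right|\leq\frac{1}{4}|M_1-m_1|\,|M_2-m_2|I.
\end{eqnarray*}

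Finally I would unwind the notation: $AB=(A_jB_j)_{j=1}^n$, so $\Phi(AB)=\sum_{j=1}^nC_j^*A_jB_jC_j$, while $\Phi(A)=\sum_{j=1}^nC_j^*A_jC_j$ and $\Phi(B)=\sum_{j=1}^nC_j^*B_jC_j$, giving precisely the claimed inequality (with $|M_i-m_i|=M_i-m_i$ since the constants are real). The only point requiring a little care — and the step I would flag as the main, though minor, obstacle — is the verification that the product structure and the diameter hypotheses transfer correctly to the direct sum: one must note that the ball of diameter $[mI,MI]$ in $\bigoplus_j\mathscr{A}$ is exactly the set of tuples each of whose entries lies in the corresponding ball in $\mathscr{A}$, which is where the identification of the operator-interval condition with the metric-ball condition (made explicit in the remark following Lemma \ref{lem1}) is used. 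Everything else is bookkeeping.
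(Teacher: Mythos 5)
Your proposal is correct and is essentially the route the paper intends: it states the corollary as the discrete case of Theorem \ref{main2} via the unital completely positive map $\Phi\big((X_j)\big)=\sum_{j=1}^nC_j^*X_jC_j$ on $C(\{1,\dots,n\},\mathscr{A})\cong\bigoplus_{j=1}^n\mathscr{A}$, leaving the verification implicit. Your write-up simply fills in the bookkeeping (unitality from $\sum_jC_j^*C_j=I$, complete positivity, and the identification of the operator-interval hypotheses with the ball-of-diameter condition in the direct sum), which matches the paper's argument.
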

The last inequality is clearly a generalization of the discrete case
of the integral version \eqref{MSM} of the Gr\"uss inequality which
asserts that if $m_1\leq a_j \leq M_1, m_2\leq b_j \leq M_2\,\,(j=1,
\cdots, n)$ are real numbers, then
\begin{eqnarray*}
\left|\frac{1}{n}
\sum_{j=1}^na_jb_j-\frac{1}{n}\sum_{j=1}^na_j\frac{1}{n}\sum_{j=1}^nb_j\right|
\leq \frac{1}{4} (M_1-m_1)(M_2-m_2).
\end{eqnarray*}

It's worth mentioning here that a more precise estimate of the discrete Gr\"uss inequality is the inequality by
Biernacki, Pidek and Ryll-Nardjewski \cite{BPR} which states that if $m_1\leq a_j \leq M_1, m_2\leq b_j \leq M_2\,\,(j=1,
\cdots, n)$ are real numbers, then

\begin{eqnarray*}
\left|\frac{1}{n}
\sum_{j=1}^na_jb_j-\frac{1}{n}\sum_{j=1}^na_j\frac{1}{n}\sum_{j=1}^nb_j\right|
\leq \frac{1}{n} \left[ \frac{n}{2}\right] \left( 1- \frac{1}{n}\left[ \frac{n}{2}\right]\right) (M_1-m_1)(M_2-m_2).
\end{eqnarray*}

\end{document}